
\RequirePackage{fix-cm}

\documentclass[a4paper, oneside]{amsart}

\usepackage[stretch=10, shrink=10]{microtype}
\usepackage{amsmath, amssymb, amsthm}
\usepackage{graphicx}
\usepackage[margin=1in]{geometry}
\usepackage{float}
\usepackage{cases}
\usepackage{enumitem}
\usepackage{tikz}
\usepackage{fixltx2e}
\usepackage[utf8]{inputenc}
\usepackage{makecell}
\usepackage{booktabs}
\usepackage[square, numbers]{natbib}
\usepackage[colorlinks]{hyperref}
\usepackage{doi}

  
\newcommand*\diff{\mathop{}\!\mathrm{d}}

\newcommand{\circlenum}[1]{\raisebox{.5pt}{\textcircled{\raisebox{-.9pt} {#1}}}}
\newcommand{\re}{\mathrm{Re}\,}
\newcommand{\im}{\mathrm{Im}\,}
\newcommand{\circlepm}{ 
  \mathbin{
    \mathchoice
      {\buildcirclepm{\displaystyle}}
      {\buildcirclepm{\textstyle}}
      {\buildcirclepm{\scriptstyle}}
      {\buildcirclepm{\scriptscriptstyle}}
  }
}
\newcommand\buildcirclepm[1]{%
  \begin{tikzpicture}[baseline=(X.base), inner sep=0, outer sep=0]
    \node[draw,circle] (X)  {$#1\pm$};
  \end{tikzpicture}%
}
\newcommand{\resp}[1]{\textup{(}resp. #1\textup{)}}

\numberwithin{equation}{section}

\theoremstyle{plain}
\newtheorem{lemm}{Lemma}[section]
\newtheorem*{coro}{Corollary}
\newtheorem{coro2}{Corollary}[section]
\newtheorem{prop}{Proposition}[section]
\newtheorem{fact}{Fact}
\newtheorem{theorem}{Theorem}

\theoremstyle{definition}

\theoremstyle{remark}
\newtheorem{remark}{Remark}

\begin{document}

\title{Deformation and singularities of maximal surfaces with planar curvature lines}

\author{Joseph Cho}
\address{\newline Department of mathematics, Faculty of science\newline Kobe university\newline Rokkodai-cho 1-1, Nada-ku, Kobe-shi, Hyogo-ken, 657-8501\newline Japan}
\email{joseph.cho.kobe-u.ac.jp}

\author{Yuta Ogata}
\address{\newline Department of Science and Technology\newline National Institute of Technology, Okinawa College \newline Henoko 905, Nago-shi \newline Okinawa-ken, 905-2171 Japan}
\email{y.ogata@okinawa-ct.ac.jp}

\subjclass[2010]{Primary 53A10; Secondary 57R45.}
\keywords{maximal surface, planar curvature line, singularity.}

\maketitle

\begin{abstract}
Minimal surfaces with planar curvature lines in the Euclidean space have been studied since the late 19th century. On the other hand, the classification of maximal surfaces with planar curvature lines in the Lorentz-Minkowski space has only recently been given. In this paper, we use an alternative method not only to refine the classification of maximal surfaces with planar curvature lines, but also to show that there exists a deformation consisting exactly of all such surfaces. Furthermore, we investigate the types of singularities that occur on maximal surfaces with planar curvature lines. Finally, by considering the conjugate of maximal surfaces with planar curvature lines, we obtain analogous results for maximal surfaces that are also affine minimal surfaces.
\end{abstract}


\section{Introduction}
Minimal surfaces with planar curvature lines in the Euclidean space $\mathbb{R}^3$ have been studied extensively since the late 19th century by the likes of Bonnet, Enneper, and Eisenhart \cite{bonnet_observations_1855, enneper_untersuchungen_1878, eisenhart_treatise_1909}. In his book, Nitsche summarized the result, that minimal surfaces with planar curvature lines must be a piece of one, and only one, of either the plane, Enneper surface, catenoid, or a surface in the Bonnet family \cite{nitsche_lectures_1989}. Furthermore, many works have been published concerning minimal surfaces with spherical curvature lines, treating planes as spheres with infinite radius \cite{dobriner_minimalflachen_1887, wente_constant_1992}.

On the other hand, the classification of maximal surfaces with planar curvature lines in Lorentz-Minkowski space $\mathbb{R}^{2,1}$ has not been given until recently \cite{leite_surfaces_2015}. Leite developed an approach using orthogonal systems of cycles on the hyperbolic 2-space $\mathbb{H}^2$, and used the fact that families of planar curvature lines transform into orthogonal families of cycles on $\mathbb{H}^2$ under its analogue of the Gauss map. Then she obtained the data for the following Weierstrass-type representation for maximal surfaces as first stated in \cite{kobayashi_maximal_1983}, later refined to include singularities in \cite{umehara_maximal_2006}.

\begin{fact}[Weierstrass-type representation theorem for maximal surfaces]
Any conformal maximal surface $X: \Sigma \subset \mathbb{C} \to \mathbb{R}^{2,1}$ can be locally represented as
\[X = \re\int(1+h^2, i(1-h^2), -2h)\eta\,\diff z\]
over a simply-connected domain $\Sigma$ on which $h$ is meromorphic, while $\eta$ and $h^2\eta$ are holomorphic.
\end{fact}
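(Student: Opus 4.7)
The plan is to derive the representation directly from the defining equations of a conformal maximal immersion. Working in the complex coordinate $z$ on $\Sigma$, conformality of $X$ with respect to the Lorentz-Minkowski metric is equivalent to the null condition $\langle X_z, X_z\rangle = 0$, where $\langle\cdot,\cdot\rangle$ denotes the complexified inner product of signature $(2,1)$. Under this assumption, vanishing mean curvature translates to the harmonicity of $X$, i.e.\ $X_{z\bar z} = 0$, so that $\phi := 2 X_z$ has holomorphic components $(\phi_1, \phi_2, \phi_3)$ on $\Sigma$. Since $X$ is real, it can be recovered as $X = \re \int \phi\,\diff z$ up to an additive constant, and the simple-connectedness of $\Sigma$ ensures the primitive exists globally.

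Next I would rationally parametrize the null cone $\{w_1^2 + w_2^2 - w_3^2 = 0\}$ by factoring $(\phi_1 - i\phi_2)(\phi_1 + i\phi_2) = \phi_3^2$. Setting
\[
\eta := \frac{\phi_1 - i\phi_2}{2}, \qquad h := -\frac{\phi_3}{2\eta}
\]
on the open locus where $\eta \ne 0$, one computes $\phi_1 + i\phi_2 = 2h^2\eta$ and hence recovers the representation $\phi = (1+h^2,\, i(1-h^2),\, -2h)\,\eta$. By construction $\eta$ is holomorphic, $h$ is meromorphic as a ratio of holomorphic functions, and $h^2\eta = (\phi_1 + i\phi_2)/2$ is holomorphic because $\phi_1$ and $\phi_2$ are.

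The main obstacle is handling the zero set of $\eta$, which may contain singular points of $X$ in the sense of \cite{umehara_maximal_2006}. Near a zero of $\eta$ of order $k$, the component $\phi_3$ is holomorphic, so if $\phi_3$ vanishes to order at least $k$ then $h$ extends holomorphically across the point, while otherwise $h$ acquires a pole of order $k - \mathrm{ord}(\phi_3) > 0$ and $X$ has a singularity there; in both cases the identity $h^2\eta = (\phi_1 + i\phi_2)/2$ forces the product to remain holomorphic, giving precisely the two conditions in the statement. Conversely, starting from any meromorphic $h$ with $\eta$ and $h^2\eta$ holomorphic yields a holomorphic integrand whose components satisfy the null relation, so the construction reverses and any such pair $(h,\eta)$ produces a conformal maximal map on $\Sigma$, completing the proof.
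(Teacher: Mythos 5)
The paper states this as a Fact imported from the cited works of Kobayashi and Umehara--Yamada and offers no proof of its own, and your derivation --- conformality as the null condition $\langle X_z,X_z\rangle=0$, maximality as harmonicity so that $\phi=2X_z$ is holomorphic, then the rational factorization of the null cone via $\eta=(\phi_1-i\phi_2)/2$ and $h=-\phi_3/(2\eta)$ --- is precisely the standard argument behind the cited result, so the approach and the essential steps are correct. Two side remarks are inaccurate though not load-bearing: a pole of $h$ (a zero of $\eta$ with $h^2\eta\neq 0$) is a \emph{regular} immersed point where the Gauss map passes through the point at infinity (e.g.\ the Enneper surface of second kind at $z=-1$), since the induced metric is $\bigl(|\eta|-|h^2\eta|\bigr)^2(\diff u^2+\diff v^2)$ and singular points in the maxface sense occur exactly where $|h|=1$, not at poles of $h$; and the degenerate case $\phi_1-i\phi_2\equiv 0$, where $\eta\equiv 0$ and your quotient is undefined, should be disposed of separately (it forces $\phi_3\equiv 0$, i.e.\ a horizontal plane, which is covered after an isometry or by exchanging the roles of $h$ and $1/h$).
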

\noindent (We note here that for a conformal maximal surface with Weierstrass data $(h, \eta \diff z)$, one obtains the associated family of maximal surfaces via Weierstrass data $(h, \lambda^{-2} \eta \diff z)$ for $\lambda \in \mathbb{S}^1 = \{ \lambda \in \mathbb{C} : |\lambda|^2 = 1\}$.) Using the above representation, Leite produced the following classification and their respective Weierstrass data $(h, \eta \diff z)$.
\begin{fact}[\cite{leite_surfaces_2015}]
A maximal surface in Lorentz-Minkowski space $\mathbb{R}^{2,1}$ with planar curvature lines must be a piece of one, and only one of
	\begin{itemize}
		\item[$\bullet$] plane, with Weierstrass data $(0, 1 \diff z)$,
		\item[$\bullet$] Enneper surface of first kind, with Weierstrass data $(z, 1 \diff z)$,
		\item[$\bullet$] Enneper surface of second kind, with Weierstrass data $\left(\frac{1-z}{1+z}, -\frac{(1+z)^2}{2} \diff z\right)$, or one member of its associated family,
		\item[$\bullet$] catenoid of first kind, with Weierstrass data $(e^z, e^{-z} \diff z)$,
		\item[$\bullet$] catenoid of second kind, with Weierstrass data $\left(\frac{1-e^z}{1+e^z}, -1-\cosh{z} \diff z \right)$, or
		\item[$\bullet$] one surface in the Bonnet family, with Weierstrass data $\{(e^z + t, e^{-z} \diff z), t > 0\}$
	\end{itemize}
up to isometries and homotheties of $\mathbb{R}^{2,1}$.
\end{fact}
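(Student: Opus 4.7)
The plan is to argue directly from the Weierstrass-type representation, working in curvature line coordinates rather than via orthogonal cycle systems on $\mathbb{H}^2$. The strategy has three stages: adapt the conformal parameter, reduce the planar curvature line condition to a single ODE for $h$, and integrate to enumerate the six cases.

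First, I would pass to conformal curvature line coordinates $z = u + iv$, which exist on any umbilic-free open subset. For a conformal maximal surface with Weierstrass pair $(h, \eta\,\diff z)$, the Hopf-type quadratic differential is proportional to $h'\eta\,\diff z^2$, and the curvature line condition is exactly the reality of this differential. Using the residual conformal freedom, and absorbing a global phase into the associated family parameter $\lambda \in \mathbb{S}^1$, I would normalize so that $h'\eta \equiv 1$. The surface is then encoded by the single meromorphic function $h$; the plane, which is everywhere umbilic and violates this normalization, will be handled separately at the end.

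Second, I would translate planarity into an ODE. A regular space curve is planar iff its torsion vanishes, equivalently iff $X_u$, $X_{uu}$, $X_{uuu}$ are everywhere linearly dependent, with the analogous statement for the $v$-curves. Substituting
\[
X_z = \tfrac{1}{2h'}\bigl(1+h^2,\ i(1-h^2),\ -2h\bigr)
\]
into the Lorentz triple product, expanding through third derivatives, and using the minimal surface equation $X_{z\bar{z}} = 0$, I expect the combined $u$- and $v$-conditions to collapse into a Schwarzian-type ODE $\{h,z\} = R(h)$ for some explicit rational $R$. This is the computational core of the argument and the step I expect to be most delicate: one must track the Lorentzian signs so that the $u$- and $v$-conditions reinforce rather than cancel, and one must avoid dividing by quantities that vanish precisely on the loci where interesting solutions degenerate.

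Third, I would integrate. The symmetry group acting on the Weierstrass data consists of the M\"obius transformations of $h$ induced by isometries of $\mathbb{R}^{2,1}$ together with the real scaling from homotheties; modulo this action, the ODE should admit a discrete moduli plus a single continuous real parameter $t > 0$ producing the Bonnet family. Case analysis on the growth of $h$ should then yield exactly six orbits: $h$ constant (plane), $h$ affine (first Enneper), $h$ a M\"obius image of affine (second Enneper, with the ``one member of its associated family'' clause accounting for a distinguished $\lambda$-rotate that preserves planarity), $h = e^z$ (first catenoid), $h$ a M\"obius image of $e^z$ (second catenoid), and $h = e^z + t$ (Bonnet). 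Matching each orbit with the Weierstrass data listed, and checking pairwise non-congruence, completes the classification.
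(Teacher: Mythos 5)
Your route is viable and genuinely different from both arguments on record: Leite proves this Fact by classifying orthogonal systems of cycles on $\mathbb{H}^2$, while the paper re-derives the list by solving the coupled system $\rho\cdot\Delta\rho-(\rho_u^2+\rho_v^2)+1=0$, $\rho_{uv}=0$ for the conformal factor and then reconstructing $N$ and the Weierstrass data from the axial directions. You push everything onto the Gauss map $h$ instead, and your ``delicate computational core'' actually comes out cleaner than you predict. In conformal curvature line coordinates the Hopf differential $Q=-h_z\eta$ is holomorphic (Codazzi) and real, hence a real constant, so after normalizing $\eta=\tfrac{1}{2h_z}$ one has $\rho=\tfrac{1-|h|^2}{2|h_z|}$, and a direct computation gives
\[
\partial_z^2\rho=-\tfrac{\rho}{2}\,\{h,z\},\qquad \rho_{uv}=-2\,\im\bigl(\partial_z^2\rho\bigr),
\]
so the planar curvature line condition is exactly that the Schwarzian $\{h,z\}$ is real, hence a real \emph{constant} $k$ --- not a rational function $R(h)$ of $h$, which would be the wrong target to aim for. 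The trichotomy $k=0$, $k<0$, $k>0$ makes $h$ a M\"obius function of $z$, of $e^{\alpha z}$, or of $\tan(\alpha z/2)$ respectively (check: $\{e^z+t,z\}=-\tfrac12$ for the whole Bonnet family, $\{z,z\}=0$ for both Enneper surfaces), and the sign of $k$ plays the role of the paper's $c-d$. What your sketch gets wrong is the final quotient. The group available is only the $\mathrm{SO}^+(2,1)$-M\"obius action on $h$ together with $z\mapsto\pm z+z_0$ and real rescalings, and it is \emph{not} transitive on the $k=0$ M\"obius solutions: besides the orbit of $h=z$ there is a full circle of pairwise inequivalent solutions, which is precisely the associated family of the Enneper surface of the second kind (the $\Delta\rho\equiv 0$ case in the paper, where $\rho$ is linear and a rotation of coordinates trades $\lambda\in\mathbb{S}^1$ for the direction of $\nabla\rho$). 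So ``a distinguished $\lambda$-rotate that preserves planarity'' is incorrect --- the entire $\mathbb{S}^1$-family has planar curvature lines, and your enumeration must produce \emph{two} continuous families (this circle and Bonnet's $t>0$), not one discrete list plus one parameter. That orbit analysis, together with the pairwise non-congruence check, is the part of the argument you would still have to carry out in detail.
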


To study maximal surfaces with planar curvature lines, we start by proposing an alternative method to using Leite's method. In Section \ref{sec:abresch}, we closely follow the method used in \cite{cho_deformation_2017}, which was modeled after techniques used in \cite{abresch_constant_1987}, \cite{barthel_thomsensche_1980}, \cite{walter_explicit_1987}, and \cite{wente_constant_1992}. First, we obtain and solve a system of partial differential equations for the metric function using the zero mean curvature condition and the planar curvature line condition. Then, from the metric function, we find the normal vector to the surface by using the notion of axial directions. From the normal vector, we recover the Weierstrass data and the parametrizations of maximal surfaces with planar curvature lines, allowing us to obtain a complete classification (see Theorem \ref{theo:classification}).

In fact, the axial directions play a crucial role in this paper, as they allow us not only to further classify maximal Bonnet-type surfaces into three types, but also to attain deformations consisting of the surfaces under consideration. In Section \ref{sec:deformation}, we investigate those deformations, and show that there exists a single continuous deformation consisting exactly of the maximal surfaces with planar curvature lines (see Theorem \ref{theo:oneDeformation}, Fig. \ref{fig:deformationDiag}, and Fig. \ref{fig:surfTheta}).

On the other hand, the notion of maxfaces as maximal surfaces in $\mathbb{R}^{2,1}$ with singularities was introduced in \cite{umehara_maximal_2006}, and various types of singularities appearing on maximal surfaces have been studied in various works \cite{kobayashi_maximal_1984, fernandez_space_2005, umehara_maximal_2006, kim_prescribing_2007, fujimori_singularities_2008, ogata_duality_}. Since the singularities of maximal catenoids and the maximal Enneper-type surface were investigated in \cite{kobayashi_maximal_1984, umehara_maximal_2006, fujimori_singularities_2008}, in Section \ref{sec:singularities}, we recognize the types of singularities for maximal Bonnet-type surfaces using the criteria introduced in \cite{umehara_maximal_2006}, \cite{fujimori_singularities_2008}, and \cite{ogata_duality_}, and specify the types of singularities appearing in maximal surfaces with planar curvature lines (see Theorem \ref{theo:singularityType}, Fig. \ref{fig:singDiag1}, and Fig. \ref{fig:singDiag2}).

Finally in Section \ref{sec:thomsen}, we apply the results in Section \ref{sec:deformation} and Section \ref{sec:singularities} to maximal surfaces that are also affine minimal surfaces. Thomsen studied minimal surfaces in $\mathbb{R}^3$ that are also affine minimal surfaces, and mentioned that such surfaces are conjugate surfaces of minimal surfaces with planar curvature lines \cite{thomsen_uber_1923}. Manhart has shown  that the analogous result holds true for the maximal case in $\mathbb{R}^{2,1}$ \cite{manhart_bonnet-thomsen_2015}, and we use that result to consider the deformations and singularities of maximal surfaces that are also affine minimal surfaces (see Corollary \ref{coro:thomsen1}, Corollary \ref{coro:thomsen2}, Fig. \ref{fig:surfConj}, and Fig. \ref{fig:singDiagConj1}).


\section{Classification of maximal surfaces with planar curvature lines}\label{sec:abresch}
In this section, we would like to obtain a complete classification of maximal surfaces with planar curvature lines by using the Weierstrass-type representation. We use an alternative method to orthogonal systems of cycles to recover the Weierstrass data as follows: First, from the zero mean curvature condition and planar curvature line condition, we obtain and solve a system of partial differential equations for the metric function. Then using the explicit solutions for the metric function, we recover the Weierstrass data and the parametrization by calculating the unit normal vector.
\subsection{Maximal surface theory}
Let $\mathbb{R}^{2,1}$ be Lorentz-Minkowski space with Lorentzian metric
	\[
		\langle (x_1,x_2,x_0), (y_1,y_2,y_0)\rangle:=x_1y_1+x_2y_2-x_0y_0.
	\]
In addition, let $\Sigma$ be a simply-connected domain with coordinates $(u,v)\in\Sigma\subset\mathbb{R}^2$. Throughout the paper, we identify $\mathbb{R}^2$ with the set of complex numbers $\mathbb{C}$ via $(u,v) \leftrightarrow z := u + i v$ where $i = \sqrt{-1}$. Let $X:\Sigma \to \mathbb{R}^{2,1}$ be a conformally immersed spacelike surface. Since $X(u,v)$ is conformal, the induced metric $\diff s^2$ is represented as
	\[
		\diff s^2=\rho^2\,(\diff u^2+\diff v^2)
	\]
for some function $\rho:\Sigma \to \mathbb{R}_+$, where $\mathbb{R}_+$ is the set of positive real numbers.

We choose the timelike unit normal vector field $N:\Sigma \to \mathbb{H}^2$ of $X$, where $\mathbb{H}^2$ is the two-sheeted hyperboloid in $\mathbb{R}^{2,1}$ (cf. \cite[(1.2)]{umehara_maximal_2006}), i.e.\
	\[
		\mathbb{H}^2 = \mathbb{H}^2_+ \cup \mathbb{H}^2_-
	\]
for
	\[
		 \mathbb{H}^2_+ := \{ x \in \mathbb{R}^{2,1} : \langle x, x \rangle = -1, x_0 > 0\} \quad\text{and}\quad \mathbb{H}^2_- := \{ x \in \mathbb{R}^{2,1} : \langle x, x \rangle = -1, x_0 < 0\}.
	\]
Now, let $X(u,v)$ be a non-planar umbilic-free maximal surface on the domain $\Sigma$. By \cite[Lemma 2.3.2]{bobenko_painleve_2000} (see also \cite{bobenko_integrable_1990, bobenko_surfaces_1991}), we may then further assume that $(u,v)$ are conformal curvature line (or isothermic) coordinates, and that the Hopf differential factor
	\[
		Q := \langle X_{zz}, N \rangle = -\frac{1}{2}
	\]
without loss of generality. Hence, the Gauss-Weingarten equations for the maximal case are the following:
\begin{equation}\label{eqn:gaussW}
	\begin{cases}
       	 	X_{uu} = \frac{\rho_u}{\rho} X_u -\frac{\rho_v}{\rho} X_v +N,\quad X_{vv} = -\frac{\rho_u}{\rho} X_u + \frac{\rho_v}{\rho} X_v -N\\
		X_{uv} = \frac{\rho_v}{\rho} X_u + \frac{\rho_u}{\rho} X_v, \quad N_u = \frac{1}{\rho^2} X_u, \quad N_v = -\frac{1}{\rho^2} X_v
	\end{cases},
\end{equation}
while the integrability condition, or the Gauss equation, becomes
	\[
		\rho\cdot\Delta\rho - (\rho_u^2 + \rho_v^2) + 1 = 0
	\]
where $\Delta = \partial_u^2 + \partial_v^2$.
Finally, changing $Q\mapsto\lambda^{-2}Q$ for $\lambda\in\mathbb{S}^1\subset\mathbb{C}$, we obtain the associated family of $X(u,v)$. In particular, if $\lambda^{-2} = \pm i$ then the new surface is called the \emph{conjugate} of the original surface.

\subsection{The planar curvature condition and analytic classification}\label{subsec:max_Abresch}
Now, we impose the planar curvature line condition on a maximal surface. First, we consider the relationship between the planar curvature line condition and the metric function.

\begin{lemm}
For a non-planar umbilic-free maximal surface $X(u,v)$, the following statements are equivalent:
	\begin{enumerate}
		\item $u$-curvature lines are planar.
		\item $v$-curvature lines are planar.
		\item $\rho_{uv}= 0$.
	\end{enumerate}
\end{lemm}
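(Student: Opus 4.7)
The plan is to reduce both planarity conditions to the single PDE $\rho_{uv}=0$ by a direct computation based on \eqref{eqn:gaussW}. Recall that a regular curve in $\mathbb{R}^{2,1}$ lies in an affine plane if and only if its first three derivatives $\gamma'$, $\gamma''$, $\gamma'''$ are everywhere linearly dependent. To prove (1) $\Leftrightarrow$ (3), I would fix $v=v_0$, set $\gamma(u):=X(u,v_0)$, and read off $\gamma'=X_u$ and $\gamma''=X_{uu}$ directly from \eqref{eqn:gaussW}. Then I would differentiate $X_{uu}$ once more in $u$, substituting \eqref{eqn:gaussW} for each of the second derivatives $X_{uu}$, $X_{uv}$, and $N_u$ appearing on the right, so that $X_{uuu}$ is expressed as a linear combination of $X_u$, $X_v$, and $N$.

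In the frame $\{X_u,X_v,N\}$, the three vectors $\gamma',\gamma'',\gamma'''$ assemble into a $3\times 3$ coefficient matrix whose first row is $(1,0,0)$, so the determinant collapses to the $2\times 2$ minor consisting of the $X_v$- and $N$-coefficients of $\gamma''$ and $\gamma'''$. After expanding this minor, the cross-terms of the form $\rho_u\rho_v/\rho^2$ combine with the $u$-derivative of $\rho_v/\rho$ to leave exactly $\rho_{uv}/\rho$. Since $\rho>0$, the three derivatives are linearly dependent at every point if and only if $\rho_{uv}\equiv 0$, which gives (1) $\Leftrightarrow$ (3). The equivalence (2) $\Leftrightarrow$ (3) follows from the verbatim computation with $u$ and $v$ interchanged. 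Note, however, that \eqref{eqn:gaussW} is not symmetric under this swap: the sign in front of $N$ flips in $X_{vv}$ compared with $X_{uu}$, reflecting the sign convention $Q=-\tfrac{1}{2}$ for the Hopf differential, so the calculation must be redone. A careful re-run produces the same final expression $\rho_{uv}/\rho$, and hence the same conclusion.

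The main obstacle is not conceptual but organizational: faithfully applying \eqref{eqn:gaussW} to every second derivative that appears upon differentiating $X_{uu}$ (respectively $X_{vv}$) once more, and tracking signs carefully through the Lorentzian Gauss--Weingarten equations. The one non-obvious point is the cancellation that reduces the $2\times 2$ minor to the clean expression $\rho_{uv}/\rho$; this is what makes the planarity condition equivalent to such a simple PDE for $\rho$, and is precisely the fact that opens the door to the Abresch-type integration of $\rho$ carried out later in Section \ref{sec:abresch}.
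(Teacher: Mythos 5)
Your proposal is correct and follows essentially the same route as the paper, which likewise reduces planarity of the coordinate curves to the conditions $\det(X_u,X_{uu},X_{uuu})=0$ and $\det(X_v,X_{vv},X_{vvv})=0$ computed from the Gauss--Weingarten equations \eqref{eqn:gaussW}. Your expansion in the frame $\{X_u,X_v,N\}$ is right: both determinants equal $\rho_{uv}/\rho$ times the (nonvanishing) frame determinant, so the equivalence with (3) holds in both directions exactly as you describe.
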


\begin{proof}
Similar to the proof of Lemma 2.1 of \cite{cho_deformation_2017}, we can show this by calculating the conditions $\det(X_u, X_{uu}, X_{uuu}) = 0$ and $\det(X_v, X_{vv}, X_{vvv}) = 0$ using \eqref{eqn:gaussW}. 
\end{proof}

Therefore, finding all non-planar umbilic-free maximal surfaces with planar curvature lines is equivalent to finding solutions to the following system of partial differential equations:
	\begin{subnumcases}{\label{eqn:max_pde}}
		\rho\cdot\Delta\rho-(\rho_u^2+\rho_v^2)+1=0 \label{eqn:max_pde1} &\text{(Gauss equation for maximal surfaces),}\\
		\rho_{uv}= 0 \label{eqn:max_pde2} &\text{(planar curvature line condition)}.
	\end{subnumcases}
To solve the above system, we note that \eqref{eqn:max_pde1} and \eqref{eqn:max_pde2} can be reduced to a system of ordinary differential equations as follows.

\begin{lemm}\label{lemm:max_solutionRho}
For a solution $\rho: \Sigma \to \mathbb{R}_+$ to \eqref{eqn:max_pde1} and \eqref{eqn:max_pde2}, there exist real-valued functions $f(u)$ and $g(v)$ such that
	\begin{subnumcases}{\label{eqn:max_recover}}
		\rho_u =f(u) \label{eqn:max_recover1}, \\
		\rho_v =g(v) \label{eqn:max_recover2}.
	\end{subnumcases}
Furthermore, $\rho(u,v)$ can be explicitly written in terms of $f(u)$ and $g(v)$ as follows:
\begin{description}
    \item[Case (1):] If $\Delta\rho$ is nowhere zero on $\Sigma$,
    \begin{equation} \label{eqn:max_solutionRho1}
    	\rho(u,v)= \frac{f(u)^2 + g(v)^2-1}{f_u(u) + g_v(v)},
    \end{equation}
    where $f(u)$ and $g(v)$ satisfy the following system of ordinary differential equations:
    	\begin{subnumcases}{\label{eqn:max_ode}}
    		(f_u(u))^2 = (d-c) f(u)^2 + c \label{eqn:max_ode1}\\
    		f_{uu}(u)= (d- c) f(u) \label{eqn:max_ode2}\\
    		(g_v(v))^2 = (c- d) g(v)^2 + d \label{eqn:max_ode3}\\
    		g_{vv}(v)= (c- d) g(v) \label{eqn:max_ode4}
    	\end{subnumcases}
    for real constants $c$ and $d$ such that $c^2 + d^2 \neq 0$.

    \item[Case (2):] If $\Delta\rho \equiv 0$ on $\Sigma$, i.e.\ $\Delta\rho$ is identically zero on $\Sigma$,
    \begin{equation} \label{eqn:max_solutionRho2}
    	\rho(u,v) = (\cos \phi) \cdot u + (\sin \phi) \cdot v.
    \end{equation}
    where $f(u) = \sin \phi$ and $g(v) = \cos \phi$ for some constant $\phi \in [0, 2\pi)$.

\end{description}
\end{lemm}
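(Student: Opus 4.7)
The plan is to exploit the separation of variables forced by the planar curvature line condition \eqref{eqn:max_pde2} and then peel off the structure of the Gauss equation \eqref{eqn:max_pde1} in stages. First, since $\rho_{uv}=0$, the partial derivative $\rho_u$ depends only on $u$ and $\rho_v$ depends only on $v$; we thus define $f(u) := \rho_u$ and $g(v):=\rho_v$, obtaining \eqref{eqn:max_recover}. Integrating, $\rho$ must have the additively separated form $\rho(u,v)=F(u)+G(v)$ with $F'=f$, $G'=g$, and in particular $\Delta\rho=f_u(u)+g_v(v)$.

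For Case (2), when $\Delta\rho\equiv 0$ the Gauss equation reduces to $\rho_u^2+\rho_v^2=1$, that is $f(u)^2+g(v)^2=1$. Separating variables forces both $f$ and $g$ to be constants whose squares sum to one, and so the additive form of $\rho$ yields \eqref{eqn:max_solutionRho2} after an appropriate parametrization by $\phi\in[0,2\pi)$ (absorbing the additive integration constant into the coordinates).

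For Case (1), I would first rewrite the Gauss equation as
\[
\rho = \frac{f(u)^2+g(v)^2-1}{f_u(u)+g_v(v)},
\]
which is legitimate since $\Delta\rho=f_u+g_v$ is nowhere zero, giving \eqref{eqn:max_solutionRho1}. The next step is to extract the second-order ODEs \eqref{eqn:max_ode2} and \eqref{eqn:max_ode4}. To do this, I would apply $\partial_u\partial_v$ to the Gauss equation (or, equivalently, differentiate the compatibility identity $\partial_u\rho=f$ using the above quotient expression); after using $\rho_{uv}=0$, the surviving terms reduce to the form $g(v)\,f_{uu}(u)+f(u)\,g_{vv}(v)=0$, whose separation of variables gives $f_{uu}=(d-c)f$ and $g_{vv}=(c-d)g$ for a common separation constant which I label $d-c$.

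Finally, to obtain the first-order ODEs \eqref{eqn:max_ode1} and \eqref{eqn:max_ode3} with the precise constants $c$ and $d$, I would substitute $f_{uu}=(d-c)f$ back into the identity $\rho_u=f$ read off from the quotient form of $\rho$. After clearing denominators and simplifying, the resulting expression separates as
\[
f_u(u)^2-(d-c)f(u)^2 \;=\; g_v(v)^2+(d-c)g(v)^2-(d-c),
\]
where the two sides depend on independent variables and hence equal a common constant; calling this constant $c$ yields \eqref{eqn:max_ode1}, and rearranging the right-hand side yields \eqref{eqn:max_ode3} with constant $d$. The condition $c^2+d^2\neq 0$ is then automatic, for $c=d=0$ would force $f_u\equiv g_v\equiv 0$ and hence $\Delta\rho\equiv 0$, contradicting the hypothesis of Case (1). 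The main obstacle is purely bookkeeping: tracking which algebraic rearrangement cleanly separates variables and correctly identifies the two integration constants $c$ and $d$, but this is routine once the quotient form of $\rho$ and the second-order ODEs are in hand.
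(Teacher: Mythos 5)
Your proposal is correct and follows essentially the same route as the paper, which itself only outlines the argument by reference to its minimal-surface analogue: integrate $\rho_{uv}=0$ to get $f$ and $g$, solve the Gauss equation algebraically for $\rho$ when $\Delta\rho$ is nowhere zero, and then differentiate the resulting compatibility identities and separate variables to extract the ODE system \eqref{eqn:max_ode1}--\eqref{eqn:max_ode4}. The only point deserving an extra sentence in a final write-up is that the separation step $g(v)f_{uu}(u)+f(u)g_{vv}(v)=0$ and the later division by $f$ tacitly assume $f\not\equiv 0$ and $g\not\equiv 0$; the degenerate cases (e.g.\ $g\equiv 0$, which the paper later admits with $d=0$) must be checked directly, though they satisfy the ODE system trivially with the corresponding constant equal to zero.
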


\begin{proof}
%
Arguments to prove \eqref{eqn:max_recover1}, \eqref{eqn:max_recover2}, \eqref{eqn:max_solutionRho1}, and \eqref{eqn:max_ode1}--\eqref{eqn:max_ode4} are similar to those in the proof of Lemma 2.2 in \cite{cho_deformation_2017}; here we only give a short outline of these arguments.

Integrating \eqref{eqn:max_pde2} once with respect to $u$ and once with respect to $v$ gives \eqref{eqn:max_recover1} and \eqref{eqn:max_recover2}. Now first assume that $\Delta \rho$ is not identically equal to zero. Then we can choose a point $(u_0, v_0)$ such that $\rho(u_0, v_0) \neq 0$, implying that we can choose a neighborhood $\Sigma \subset \mathbb{R}^2$ of $(u_0, v_0)$ such that $\Delta\rho$ is nowhere zero on $\Sigma$. On such $\Sigma$, we can use \eqref{eqn:max_pde1} with \eqref{eqn:max_recover1} and \eqref{eqn:max_recover2} to obtain \eqref{eqn:max_solutionRho1}. Finally, using \eqref{eqn:max_recover1} and \eqref{eqn:max_recover2} with \eqref{eqn:max_solutionRho1} and integrating with respect to $u$ and $v$, respectively, we obtain \eqref{eqn:max_ode1}--\eqref{eqn:max_ode4}.

Now assume that $\Delta \rho$ is identically equal to zero on some simply-connected domain $\Sigma \subset \mathbb{R}^2$. Since $\Delta \rho = f_u(u)+g_v(v)\equiv 0$,
$f(u)^2 + g(v)^2=1$
for all $u$ and $v$. This implies that both $f(u)$ and $g(v)$ are constant, and we can set $f:=\cos\phi$ and $g:=\sin\phi$ for some constant $\phi \in [0, 2\pi)$. Solving \eqref{eqn:max_recover1} and \eqref{eqn:max_recover2}, we obtain \eqref{eqn:max_solutionRho2}.
\end{proof}

We would now like to solve for $f(u)$ and $g(v)$ satisfying \eqref{eqn:max_ode1}--\eqref{eqn:max_ode4} in Case (1). First, assume that $c = d$. Then \eqref{eqn:max_ode1} and \eqref{eqn:max_ode3} imply that $c = d > 0$ and that
	\begin{equation}\label{eqn:cequalsd}
		f(u) = \pm \sqrt{c}\,u + \tilde{C}_1\quad\text{and}\quad g(v) = \pm \sqrt{d}\,v + \tilde{C}_2
	\end{equation}
for some real constants of integration $\tilde{C}_1$ and $\tilde{C}_2$.
Now assuming that $c \neq d$, we can explicitly solve for $f(u)$ and $g(v)$ to find that
	\begin{equation}\label{eqn:cneqd}
		\begin{aligned}
		f(u) &= C_1 e^{\sqrt{d - c}\,u} + C_2 e^{-\sqrt{d - c}\,u}, \quad 4(c - d) C_1 C_2 = c,\\
		g(v) &= C_3 e^{\sqrt{c - d}\,v} + C_4 e^{-\sqrt{c - d}\,v}, \quad 4(d - c) C_3 C_4 = d,
		\end{aligned}
	\end{equation}
where $C_1, \ldots, C_4 \in \mathbb{C}$ are constants of integration. Furthermore since $f(u)$ and $g(v)$ are real-valued functions, $C_1, \ldots, C_4$ must satisfy
	\[\begin{cases}
		C_1, C_2 \in \mathbb{R} \text{ and } C_3 = \overline{C_4}, &\text{if }d > c,\\
		C_1 = \overline{C_2} \text{ and } C_3, C_4 \in \mathbb{R}, &\text{if }c > d,
	\end{cases}\]
where $\bar{\cdot}$ denotes the complex conjugation.

To explicitly solve for $f(u)$ and $g(v)$ and hence $\rho(u,v)$, we first need to consider the initial conditions of $f(u)$ and $g(v)$. We identify the exact conditions for $f(u)$ and $g(v)$ having a zero, and derive the appropriate initial conditions in the following series of lemmas.

\begin{lemm}\label{lemm:zero}
$f(u)$ \resp{$g(v)$} satisfying \eqref{eqn:max_ode1}--\eqref{eqn:max_ode4} has a zero if and only if either $c > 0$ or $f(u) \equiv 0$ \resp{$d > 0$ or $g(v) \equiv 0$}.
\end{lemm}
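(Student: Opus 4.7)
The plan is to exploit the first-order relation \eqref{eqn:max_ode1}, which reads $(f_u)^2 = (d-c)f^2 + c$, together with the explicit formulas \eqref{eqn:cequalsd} and \eqref{eqn:cneqd} already in hand.

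For the forward direction, suppose $f(u_0) = 0$ for some $u_0$. Evaluating \eqref{eqn:max_ode1} at $u_0$ yields $(f_u(u_0))^2 = c$, which immediately forces $c \geq 0$ and rules out $c < 0$ outright. If $c > 0$ we are done, so assume $c = 0$. Then one also obtains $f_u(u_0) = 0$, and since $f$ satisfies the linear second-order ODE \eqref{eqn:max_ode2} with vanishing initial data at $u_0$, standard uniqueness of solutions yields $f \equiv 0$.

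For the converse, the case $f \equiv 0$ is trivial, so assume $c > 0$ and produce a zero by inspecting the explicit solutions. When $c = d$, formula \eqref{eqn:cequalsd} gives $f(u) = \pm \sqrt{c}\, u + \tilde{C}_1$, a linear function with nonzero slope, which has a zero. When $c \neq d$, formula \eqref{eqn:cneqd} provides $f(u) = C_1 e^{\sqrt{d-c}\,u} + C_2 e^{-\sqrt{d-c}\,u}$ with the constraint $4(c-d) C_1 C_2 = c$. If $d > c$, then $C_1, C_2$ are real and $C_1 C_2 < 0$, so the equation $f(u) = 0$ reduces to $e^{2\sqrt{d-c}\,u} = -C_2/C_1 > 0$, which admits a real solution. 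If $c > d$, then $\sqrt{d-c} = i\sqrt{c-d}$ and $C_2 = \overline{C_1}$, so $f(u) = 2\,\mathrm{Re}\bigl(C_1 e^{i\sqrt{c-d}\,u}\bigr)$ is a sinusoid; the relation $4(c-d) C_1 C_2 = c \neq 0$ forces $C_1 \neq 0$, so this sinusoid is nontrivial and has infinitely many zeros.

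The argument for $g(v)$ is entirely symmetric under $(f, u, c) \leftrightarrow (g, v, d)$ by swapping \eqref{eqn:max_ode1}--\eqref{eqn:max_ode2} with \eqref{eqn:max_ode3}--\eqref{eqn:max_ode4}. The only delicate step is the boundary case $c = 0$ in the forward direction, where the ``or $f \equiv 0$'' escape clause in the statement becomes essential; this is what is cleanly handled by the ODE uniqueness applied to \eqref{eqn:max_ode2}.
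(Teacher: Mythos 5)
Your proof is correct and follows essentially the same route as the paper: evaluate \eqref{eqn:max_ode1} at a zero to force $c \geq 0$, and read zeros off the explicit solutions \eqref{eqn:cequalsd} and \eqref{eqn:cneqd} when $c > 0$. The only cosmetic differences are that you invoke uniqueness for the linear ODE \eqref{eqn:max_ode2} in the $c=0$ subcase where the paper instead deduces $C_1 = 0$ from the explicit formula, and you establish the existence of zeros qualitatively (sign of $C_1C_2$, nontrivial sinusoid) where the paper writes down the point $u_0$ explicitly.
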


\begin{proof}
If $c = d$, then the statement is trivial by \eqref{eqn:cequalsd}; hence, we may assume $c \neq d$. To prove the necessary condition, since $f \equiv 0$ case is trivial, assume that $c > 0$, and we show that there is some real $u_0$ such that $f(u+0) = 0$. If $d > c$, then it is easy to check that for
	\[
		u_0 := \frac{\log c - \log (4(d-c) C_1^2)}{2 \sqrt{d-c}}
	\]
we get $f(u_0) = 0$ by \eqref{eqn:cneqd}.

Now assume $c > d$. Then since
	\[
		C_1 = \frac{c}{4(c-d) C_2} = \frac{c}{4(c-d) \overline{C_1}},
	\]
we may write $C_1 = \sqrt{\frac{c}{4(c-d}} e^{i\theta}$ and $C_2 = \sqrt{\frac{c}{4(c-d}} e^{-i\theta}$ for some constant $\theta \in \mathbb{R}$. By letting
	\[
		u_0 := \frac{\frac{\pi}{2}-\theta}{\sqrt{c-d}},
	\]
we have $f(u_0) = 0$ again by \eqref{eqn:cneqd}.

To show the sufficient condition, suppose there is some $u_0$ such that $f(u_0) = 0$. By \eqref{eqn:max_ode1}, $(f_u(u_0))^2 = c \geq 0$. If $c = 0$, then, \eqref{eqn:cneqd} gives us
\[f(u) = C_1 e^{\sqrt{d}u} + C_2 e^{-\sqrt{d}u}\]
for some complex constants $C_1$ and $C_2$ where $d\cdot C_1C_2 = 0$. Since $d \neq 0$, without loss of generality, let $C_2 = 0$. From $f(u_0) = 0$, we get $C_1 e^{\sqrt{d}u_0} = 0$. Therefore, $C_1 = 0$, and we have $f(u) \equiv 0$.

The statement regarding $g(v)$ is proven analogously.
\end{proof}

\begin{lemm}
$f(u)$ \resp{$g(v)$} has no zero if and only if either $c < 0$ or $f(u) = \pm e^{\sqrt{d} u}$ where $d > 0$ \resp{$d < 0$ or $g(v) = \pm e^{\sqrt{c} u}$ where $c > 0$}.
\end{lemm}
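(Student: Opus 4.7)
The plan is to argue by contraposition against the preceding Lemma \ref{lemm:zero} and then split into the remaining subcases.

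For the necessary direction, I would assume $f(u)$ has no zero. The contrapositive of Lemma \ref{lemm:zero} immediately gives $c \leq 0$ together with $f \not\equiv 0$. If $c < 0$, the first alternative of the conclusion holds and we are done. Otherwise $c = 0$; since the standing hypothesis $c^2 + d^2 \neq 0$ forces $d \neq 0$, and since $c \neq d$, we can apply the explicit formula \eqref{eqn:cneqd}. The compatibility condition $4(c-d)C_1 C_2 = c$ with $c = 0$ collapses to $C_1 C_2 = 0$, so at least one of $C_1, C_2$ vanishes. A sign analysis on $d$ then finishes this direction, as described next.

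If $d > 0$, then $d > c$, so the reality constraints stated after \eqref{eqn:cneqd} read $C_1, C_2 \in \mathbb{R}$. Exactly one of them is zero (else $f \equiv 0$), giving $f(u) = C_1 e^{\sqrt{d}\,u}$ or $f(u) = C_2 e^{-\sqrt{d}\,u}$; after possibly replacing $u$ by $-u$ and then translating $u$ by $u_0 = -(\log|C_1|)/\sqrt{d}$ (a transformation preserving the isothermic conformal curvature line coordinate structure), the coefficient normalizes to $\pm 1$, producing $f(u) = \pm e^{\sqrt{d}\,u}$ as claimed. If instead $d < 0$, then $c = 0 > d$ places us in the regime $c > d$, so the reality constraint becomes $C_1 = \overline{C_2}$; combined with $C_1 C_2 = 0$, this forces $C_1 = C_2 = 0$, hence $f \equiv 0$, contradicting $f \not\equiv 0$. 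So the case $c = 0$, $d < 0$ cannot occur, and the necessary direction is complete.

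The sufficient direction is short: if $c < 0$, then at any hypothetical zero $u_0$ of $f$ the ODE \eqref{eqn:max_ode1} would give $(f_u(u_0))^2 = c < 0$, impossible, so $f$ has no zero; and if $f(u) = \pm e^{\sqrt{d}\,u}$ with $d > 0$, the exponential is plainly nowhere vanishing. The statement for $g(v)$ is proven analogously, interchanging the roles of $(f,c)$ and $(g,d)$ throughout. The main (and only) subtlety is the coordinate normalization that converts $C_1 e^{\sqrt{d}\,u}$ to $\pm e^{\sqrt{d}\,u}$; everything else is a direct consequence of the previous lemma and the explicit form \eqref{eqn:cneqd}.
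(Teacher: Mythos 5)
Your proof is correct and takes essentially the same route as the paper: reduce via Lemma \ref{lemm:zero} to the case $c=0$, $f\not\equiv 0$, use \eqref{eqn:cneqd} with $4(c-d)C_1C_2=c=0$ to kill one exponential, and normalize the surviving coefficient to $\pm1$ by a shift (and possibly reflection) of the parameter. Your explicit exclusion of $d<0$ via the reality constraint $C_1=\overline{C_2}$, and your direct verification of the sufficiency direction, are only slightly more detailed versions of what the paper compresses into one line each.
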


\begin{proof}
Note that by the previous lemma and the fact that $c < 0$ implies $f(u) \not \equiv 0$, we only need to show that $f(u) \not \equiv 0$ and $c = 0$ if and only if $f(u) = C_1 e^{\sqrt{d} u}$ for $C_1= \pm 1$ and $d >0$.

First, suppose that $f(u) \not \equiv 0$ and $c = 0$. Then, similar to the proof of the previous lemma,
\[f(u) = C_1 e^{\sqrt{d}u}\]
for some complex constant $C_1$. Since $f(u) \not \equiv 0$, $C_1 \neq 0$. In addition, since $f(u)$ is real, $C_1$ is real, and $d > 0$. Finally, by shifting parameters, we may assume that $C_1 = \pm 1$.

Now assume that $f(u) = \pm e^{\sqrt{d} u}$ for $d > 0$. Then $f(u) \not \equiv 0$ trivially. Furthermore, \eqref{eqn:max_ode2} implies that $c\cdot (\pm e^{\sqrt{d} u}) = 0$ for all $u$. Hence, $c = 0$.
\end{proof}

\begin{lemm}
At least one of $f(u)$ or $g(v)$ must have a zero.
\end{lemm}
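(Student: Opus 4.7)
The plan is to argue by contradiction: suppose that neither $f(u)$ nor $g(v)$ has a zero on $\Sigma$. Applying the previous lemma to each of the two functions yields four possibilities for the pair of constants $(c, d)$. Three of these combinations immediately fail because they require $c$ or $d$ to satisfy incompatible sign conditions (for instance, the case $c = 0$ with $d > 0$ coming from the characterization for $f$ cannot coexist with the conditions $d < 0$ or $d = 0$ with $c > 0$ coming from the characterization for $g$). So only the case $c < 0$ and $d < 0$ survives.

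Next, I would rule out the sub-case $c = d$ within the surviving case: by \eqref{eqn:cequalsd}, $c = d$ forces $c = d > 0$, which contradicts $c, d < 0$. Hence $c \neq d$, and by symmetry (swapping the roles of $f$ and $g$) one may assume without loss of generality that $d > c$, so that $d - c > 0$.

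The final step is a direct sign check in the explicit formulas \eqref{eqn:cneqd}. Since $d > c$, the reality conditions listed after \eqref{eqn:cneqd} give $C_4 = \overline{C_3}$, hence
\[
    C_3 C_4 \;=\; |C_3|^2 \;\geq\; 0.
\]
The normalization $4(d - c)\, C_3 C_4 = d$ from \eqref{eqn:cneqd} then reads $4(d - c)\,|C_3|^2 = d$, whose left-hand side is non-negative (as $d - c > 0$) while the right-hand side is strictly negative (as $d < 0$). This contradiction finishes the argument.

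The only real obstacle is the initial combinatorial bookkeeping over the four sign combinations produced by the previous lemma; once one is reduced to the single case $c, d < 0$ with $c \neq d$, the mismatch of signs in the normalization constants of \eqref{eqn:cneqd} delivers the contradiction with essentially no computation.
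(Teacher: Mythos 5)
Your argument is correct and takes essentially the same route as the paper's: both rely on the preceding zero/no-zero characterization lemmas to reduce to the sign configuration $c<0$, $d<0$, and then derive a contradiction from the sign constraints of the system \eqref{eqn:max_ode} --- you read this off the normalization $4(d-c)C_3C_4=d$ in the explicit solution \eqref{eqn:cneqd}, while the paper reads the same incompatibility directly from $(g_v(v))^2=(c-d)g(v)^2+d\geq 0$. The only difference is organizational (a symmetric four-case contradiction versus the paper's ``fix $f$ zero-free, show $g$ vanishes''), and it is immaterial.
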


\begin{proof}
Without loss of generality, suppose that $f(u)$ does not have a zero. Hence, by the previous lemma, $c < 0$ or $f(u) = \pm e^{\sqrt{d} u}$ where $d > 0$. If $f(u) = \pm e^{\sqrt{d} u}$ with $d > 0$, $g(v)$ must have a zero by Lemma \ref{lemm:zero}.

Now suppose $c < 0$. Then, by \eqref{eqn:max_ode1}, $d-c > 0$. If $d < 0$, then \eqref{eqn:max_ode3} implies $c - d > 0$, a contradiction; hence, $d \geq 0$. If $d = 0$, direct calculation shows that either $g(v) \equiv 0$ or $g(v) = C_2 e^{\sqrt{c} u}$ where $C_2 \neq 0$ and $c > 0$. However, since we assumed $c < 0$, it must follow that $g(v) \equiv 0$ or $d > 0$. Hence, $g(v)$ must have a zero.
\end{proof}

Exchanging the roles of $u$ and $v$, if necessary, we may assume without loss of generality that $g$ has a zero, and we may further assume that $g(0) = 0$ by shifting parameters. By considering the fact that we may switch the roles of $f(u)$ and $g(v)$, we only need to consider the following five cases:
	\begin{equation}\label{eqn:fivecases}
		\begin{array}{| c | c | c | c | c |}
			\hline
			c > 0, d > 0 & c > 0, g(v) \equiv 0 & f(u) = \pm e^{\sqrt{d}u}, d > 0 & c < 0, d > 0 & c < 0, g(v) \equiv 0 \\
			\hline
		\end{array}
	\end{equation}
It should be noted that in the cases considered \eqref{eqn:fivecases}, $d \geq 0$, and that $d = 0$ if and only if $g(v) \equiv 0$. For the third case, since $c = 0$,
$g(v) = \sin{(\sqrt{d}v)}.$
By letting $v \mapsto -v$, we see that the plus or minus condition on $f(u)$ may be dropped, allowing us to assume that $f(u) = e^{\sqrt{d}u}$. Finally, we prove the following statement regarding the initial condition of $f(u)$.

\begin{lemm}
For the cases \eqref{eqn:fivecases}, there is some $u_0$ such that $f(u_0) = 1$.
\end{lemm}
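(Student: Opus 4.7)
The plan is to use the first integral \eqref{eqn:max_ode1} to pin down the range of $f$ on $\mathbb{R}$, and then verify that $1$ lies in that range for each of the five cases. The symmetry $f\mapsto -f$ preserves \eqref{eqn:max_ode1}--\eqref{eqn:max_ode2}, so whenever only $-1$ is initially in the range it suffices to flip the sign; this sign change is admissible because every step in Lemma \ref{lemm:max_solutionRho} is invariant under it.

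Case 3 is immediate, since $f(u)=e^{\sqrt{d}\,u}$ gives $f(0)=1$. For the remaining four cases I would organize the argument by the sign of $d-c$. When $d>c$, \eqref{eqn:max_ode1} shows that $f$ is either strictly monotonic on $\mathbb{R}$ with full range $\mathbb{R}$ (when $c\ge 0$) or of constant sign with range $[\sqrt{c/(c-d)},\infty)$ up to sign (when $c<0$); in both subcases the asymptotics of \eqref{eqn:cneqd} give the surjectivity onto that interval. When $d<c$, the same first integral makes $f$ oscillatory with amplitude exactly $\sqrt{c/(c-d)}$. When $d=c$, formula \eqref{eqn:cequalsd} gives a linear, hence surjective, $f$.

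Applying this framework: in Case 1 ($c,d>0$), the range is all of $\mathbb{R}$ when $d\ge c$, and has amplitude $\sqrt{c/(c-d)}\ge 1$ when $d<c$, because $c\ge c-d$ for $d\ge 0$. In Case 2 ($c>0$, $d=0$) the amplitude is exactly $1$, so $f$ attains $\pm 1$. In Case 4 ($c<0$, $d>0$), Lemma \ref{lemm:zero} forces $f$ to have constant sign, and after the admissible sign flip $f$ ranges over $[\sqrt{|c|/(d+|c|)},\infty)$, which contains $1$ since $|c|/(d+|c|)<1$. Case 5 ($c<0$, $g\equiv 0$, forcing $d=0$) yields $(f_u)^2=c(1-f^2)\ge 0$, i.e.\ range $\{f:f^2\ge 1\}$, and after a sign flip $1$ is attained exactly at the minimum point.

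The main obstacle is purely the case bookkeeping, keeping the subcases of $d-c$ and the two sign branches of $f$ straight. Once the first integral is in hand, each verification reduces to an elementary comparison of $c/(c-d)$ with $1$, controlled by the sign of $d$; no further analytic work is needed.
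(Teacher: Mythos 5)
Your argument is correct, but it takes a genuinely different route from the paper. The paper's proof exploits the remaining freedom in the integration constants of \eqref{eqn:cneqd}: it makes the explicit choice $C_2 = \frac{c-d+\sqrt{d(d-c)}}{2(c-d)}$ (with $C_1$ then determined by $4(c-d)C_1C_2=c$), checks the reality of $f$ in the two regimes $d-c>0$ and $d-c<0$, and verifies algebraically that $C_1+C_2=f(0)=1$; the case $c=d$ is read off from \eqref{eqn:cequalsd}. You instead never touch the integration constants and read the range of $f$ directly off the first integral $(f_u)^2=(d-c)f^2+c$: monotone with full range $\mathbb{R}$ when $0<c\le d$, oscillatory with amplitude $\sqrt{c/(c-d)}\ge 1$ when $d<c$, and convex of constant sign with minimum value $\sqrt{|c|/(d+|c|)}<1$ (resp. exactly $1$) when $c<0$, $d>0$ (resp. $d=0$). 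Your version is more conceptual --- it explains \emph{why} the value $1$ is always attainable, via the comparison of $c/(c-d)$ with $1$ --- at the cost of more case bookkeeping, whereas the paper's is shorter once the right constant is guessed. One point you should tighten: the bare substitution $f\mapsto -f$ does \emph{not} leave the geometric data invariant (it changes $\rho=(f^2+g^2-1)/(f_u+g_v)$); the admissible normalization is $f(u)\mapsto -f(-u)$ coming from the coordinate reflection $u\mapsto -u$ (or a simultaneous sign change of $f$ and $g$, which sends $\rho$ to $-\rho$ and preserves the metric). Since that operation negates the range of $f$, your conclusion stands, and note that the paper's proof relies on the same normalization freedom implicitly when it fixes $C_2$.
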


\begin{proof}
It is easy to check that the statement holds if $c = d$ via \eqref{eqn:cequalsd}; hence, assume $c \neq d$. From \eqref{eqn:cneqd},
since $c \neq 0$ implies $C_1$ and $C_2$ are non-zero, we let
\[C_2 = \frac{c - d + \sqrt{d(d-c)}}{2(c-d)}.\]

If $d - c > 0$, then since $d \geq 0$, $f(u)$ is real-valued such that $f(0) = 1$. Since $c < 0$ implies that $d - c > 0$, assume $c > 0$ and $d - c < 0$. Then direct calculation shows that $C_1$ is the complex conjugate of $C_2$ implying that $f(u)$ is equal to its own conjugate. Therefore, $f(u)$ is real-valued such that $f(0) = 1$.
\end{proof}

Therefore, through shifting parameters, we may assume that $f(0) = 1$ and $g(0) = 0$. Using these initial conditions, we arrive at the following explicit solutions for $f(u)$ and $g(v)$.

\begin{prop}\label{prop:max_solutionFG}
	For a non-planar maxface $X(u,v)$ with planar curvature lines, the real-analytic solution $\rho: \mathbb{R}^2 \to \mathbb{R}$ of \eqref{eqn:max_pde1} and \eqref{eqn:max_pde2} is precisely given as follows:
	\begin{description}
		\item[Case (1)] If $\Delta \rho \not\equiv 0$, i.e.\ $\Delta \rho$ is not identically equal to zero, then
			\[
				\rho(u,v)= \frac{f(u)^2 + g(v)^2-1}{f_u(u) + g_v(v)},
			\]
		with
			\begin{equation}\label{eqn:explicitform}
				\begin{aligned}
				f(u) &=
					\begin{cases}
            					\cosh{(\sqrt{d - c}\,u)}+ {\dfrac{\sqrt{d}}{\sqrt{d-c}}} \sinh{(\sqrt{d-c}\,u)}, &\text{if }c \neq d\\
						\sqrt{d}u+1, &\text{if } c = d
					\end{cases}\\
            			g(v) &=
					\begin{cases}
						{\dfrac{\sqrt{d}}{\sqrt{d-c}}} \sin{(\sqrt{d - c}\,v)}, &\text{if } c \neq d\\
						\sqrt{d}v, &\text{if } c = d
					\end{cases}
				\end{aligned}
			\end{equation}
		where  $c^2 + d^2 \neq 0$ and $d \geq 0$.

		\item[Case (2)] If $\Delta \rho \equiv 0$, then for some constant $\phi$ such that $\phi \in [0, 2\pi)$,
			\[
				\rho(u,v) = (\cos\phi)\cdot u + (\sin\phi)\cdot v.
			\]
\end{description}
\end{prop}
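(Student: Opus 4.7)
The plan is to combine Lemma \ref{lemm:max_solutionRho} with the case analysis developed in Lemmas \ref{lemm:zero} and those following it to pin down $f$ and $g$ explicitly. Case (2) is immediate, since Lemma \ref{lemm:max_solutionRho} already gives $\rho = (\cos\phi)u + (\sin\phi)v$; so the real task is Case (1), where we must solve the ODE system \eqref{eqn:max_ode1}--\eqref{eqn:max_ode4} subject to the normalizations $f(0) = 1$ and $g(0) = 0$ extracted from the lemmas preceding the proposition.

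First I would consolidate the initial data. Evaluating \eqref{eqn:max_ode1} at $u = 0$ gives $f_u(0)^2 = d$, and evaluating \eqref{eqn:max_ode3} at $v = 0$ gives $g_v(0)^2 = d$; in the subcases with $d > 0$ I can flip the signs of $u$ and $v$ if necessary so that $f_u(0) = g_v(0) = +\sqrt{d}$. Combined with $f(0) = 1$ and $g(0) = 0$, this uniquely determines $f$ and $g$ through the second-order ODEs \eqref{eqn:max_ode2} and \eqref{eqn:max_ode4}.

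Next I would split into $c = d$ and $c \neq d$. If $c = d$, formula \eqref{eqn:cequalsd} with the initial conditions directly yields $f(u) = \sqrt{d}\,u + 1$ and $g(v) = \sqrt{d}\,v$. If $c \neq d$, substituting the initial conditions into \eqref{eqn:cneqd} forces
\[
C_1 + C_2 = 1, \qquad \sqrt{d-c}\,(C_1 - C_2) = \sqrt{d},
\]
so that $C_1, C_2 = \tfrac{1}{2}\bigl(1 \pm \sqrt{d}/\sqrt{d-c}\bigr)$ and the stated closed form for $f$ follows; the computation for $g$ is identical in structure. The five subcases listed in \eqref{eqn:fivecases} then all fall out of this single formula: when $c > d$ the quantity $\sqrt{d-c}$ is imaginary, so $\cosh$ and $\sinh$ convert to $\cos$ and $\sin$; when $c = 0$ and $d > 0$, the identity $\cosh x + \sinh x = e^x$ recovers $f(u) = e^{\sqrt{d}u}$; and when $d = 0$, the $g$ formula collapses to $g \equiv 0$, consistent with Lemma \ref{lemm:zero}.

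The main obstacle is not any individual calculation but the careful unification of the five subcases into the compact form \eqref{eqn:explicitform}, in particular tracking where $\sqrt{d-c}$ appears in denominators and how to interpret it via analytic continuation when $c > d$. A secondary subtlety is the real-analytic extension of $\rho$ from the local neighborhood $\Sigma$ of Lemma \ref{lemm:max_solutionRho} to all of $\mathbb{R}^2$ asserted in the statement; since the explicit formulas for $f$ and $g$ are real-analytic on all of $\mathbb{R}$, substitution into \eqref{eqn:max_solutionRho1} produces a globally defined real-analytic function wherever the denominator does not vanish, so the extension is automatic once the local form has been identified.
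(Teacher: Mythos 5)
Your resolution of the ODE system is correct and matches the paper's (very brief) argument: impose $f(0)=1$, $g(0)=0$, note $f_u(0)^2=g_v(0)^2=d$ from \eqref{eqn:max_ode1} and \eqref{eqn:max_ode3}, normalize the signs by $u\mapsto -u$, $v\mapsto -v$, and read off the $\cosh/\sinh$ and $\sin$ expressions, with the $c>d$ and $c=0$ subcases absorbed by analytic continuation. That part needs no repair.

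The gap is in your last paragraph, on the global extension. The proposition asserts that $\rho$ is real-analytic on all of $\mathbb{R}^2$, but your argument only produces $\rho$ ``wherever the denominator does not vanish,'' and the denominator $f_u+g_v=\Delta\rho$ genuinely does vanish along curves in every Case (1) subfamily (for instance, with $c>d>0$ one has $g_v(0)=\sqrt{d}$ while $f_u$ oscillates with amplitude $\sqrt{c}\ge\sqrt{d}$, so $f_u+g_v$ has zeros on the line $v=0$). So the extension is not automatic; one must check that the quotient in \eqref{eqn:max_solutionRho1} has removable singularities there. The paper does this with the algebraic identity obtained by subtracting \eqref{eqn:max_ode3} from \eqref{eqn:max_ode1},
\[
f_u^2-g_v^2=(d-c)\,(f^2+g^2-1),
\]
which for $c\neq d$ rewrites the quotient as
\[
\rho=\frac{f^2+g^2-1}{f_u+g_v}=\frac{f_u-g_v}{d-c},
\]
manifestly real-analytic on $\mathbb{R}^2$ since $f$ and $g$ are (the case $c=d$ is checked directly from \eqref{eqn:cequalsd}). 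You should add this identity, or some equivalent verification that numerator and denominator vanish simultaneously to the right order; without it the claim $\rho:\mathbb{R}^2\to\mathbb{R}$ is not established.
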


\begin{proof}
Solving \eqref{eqn:max_ode1}--\eqref{eqn:max_ode4} for $f(u)$ and $g(v)$ with initial conditions $f(0) = 1$ and $g(0) = 0$, and considering the change in parameter $u \mapsto -u$ or $v \mapsto -v$, if necessary, gives the explicit solutions in \eqref{eqn:explicitform}.

Now we wish to see that the domain of $\rho(u,v)$ can be extended to $\mathbb{R}^2$ globally. If $c = d$, then this is a direct result of applying the solution in \eqref{eqn:explicitform}. Therefore, assume $c \neq d$. Then by \eqref{eqn:max_ode1} and \eqref{eqn:max_ode3}, we have
	\[
		f_u^2 - g_v^2 = (d-c)(f^2 + g^2 - 1).
	\]
implying that
	\[
		\rho(u,v) = \frac{f(u)^2 + g(v)^2-1}{f_u(u) + g_v(v)} = \frac{f_u(u) - g_v(v)}{d - c}.
	\]
Therefore, the real-analyticity of $f(u)$ and $g(v)$ implies that the domain of $\rho(u,v)$ can be extended to $\mathbb{R}^2$ globally.
\end{proof}

\begin{remark}\label{rema:maxface}
We make a few important remarks about Proposition \ref{prop:max_solutionFG}:
	\begin{itemize}[label={$\bullet$}]
		\item In the statement of Proposition \ref{prop:max_solutionFG}, we now allow $\rho$ to map into $\mathbb{R}$ as opposed to $\mathbb{R}_+$, i.e.\ $\rho$ may have zeroes, or even be negative.
			By doing so, we now consider $X(u,v)$ as \emph{maxfaces}, defined in \cite{umehara_maximal_2006} as a class of maximal surfaces with singularities (see also \cite{fujimori_singularities_2008}).
		\item In \eqref{eqn:explicitform}, we allow $d - c < 0$. However, even in such case, by using the identities
			\[
				\cosh(\sqrt{d - c}\,u) = \cos(\sqrt{c-d}\,u) \quad\text{and}\quad \sinh(\sqrt{d - c}\,u) = i\sin(\sqrt{c-d}\,u)
			\]
			we see that $f(u)$ and $g(v)$ are real-valued analytic functions.
		\item For case (2) in Proposition \ref{prop:max_solutionFG}, we may use an associated family's parameter $\lambda\in\mathbb{S}^1\subset\mathbb{C}$ instead of $\phi$ through appropriate coordinate change shown below:
			\[\begin{cases}
				\tilde{u}:=\cos\phi\cdot u+\sin\phi\cdot v, & \tilde{v}:=-\sin\phi\cdot u+\cos\phi\cdot v \\
				\lambda:=e^{-i\phi}, &\tilde{Q}:=-\frac{1}{2}\lambda^{-2}=\lambda^{-2}Q.
			\end{cases}\]
			However, it should be noted that while the coordinate change $(u,v)\mapsto(\tilde{u},\tilde{v})$ and parameter change $\phi\mapsto\lambda$ preserve the conformal structure, it does not hold the curvature line coordinates such that $Q\mapsto\tilde{Q}$.
	\end{itemize}
\end{remark}

Note that for all cases, the metric function $\rho(u,v)$ is always bounded for all $(u,v)\in\mathbb{R}^2$, and we now have the following theorem. Note that $u \leftrightarrow v$, used as a subscript in Figure \ref{fig:bifurcation}, means the role of $u$ and $v$ are switched, up to shift of parameters.
\begin{theorem}\label{theo:bifurcation}
Let $X(u,v)$ be a non-planar maxface in $\mathbb{R}^{2,1}$ with isothermic coordinates $(u,v)$ such that the induced metric $\diff s^2 = \rho^2\cdot(\diff u^2 + \diff v^2)$. Then $X$ has planar curvature lines if and only if $\rho(u,v)$ satisfies Proposition \ref{prop:max_solutionFG}. Furthermore, for different values of $(c, d)$ or $\lambda$ as in Remark \ref{rema:maxface}, the surface $X(u,v)$ has the following properties based on Fig. \ref{fig:bifurcation}:
\begin{description}

	\item[Case (1)] If $\Delta \rho \not \equiv 0$, when $(c,d)$ lies on
            \begin{itemize}[label={$\bullet$}]
                \item $\circlenum{1}$: $X$ is not periodic in the $u$-direction, but constant in the $v$-direction,
                \item $\circlenum{2}, \circlenum{3}$, or $\circlenum{4}$: $X$ is not periodic in the $u$-direction, but periodic in the $v$-direction,
                \item $\circlenum{5}$: $X$ is not periodic in both the $u$-direction and the $v$-direction,
                \item $\circlenum{6}$: $X$ is periodic in the $u$-direction, but constant in the $v$-direction.
            \end{itemize}

        	\item[Case (2)]  If $\Delta \rho \equiv 0$, when $\lambda$ lies on
            \begin{itemize}[label={$\bullet$}]
                \item $\circlenum{7}$: $X$ is a surface of revolution,
                \item $\circlenum{8}$: $X$ is a surface in the associated family of $\circlenum{7}$.
            \end{itemize}
\end{description}

\begin{figure}[H]
	\centering
		\scalebox{0.8}{\includegraphics{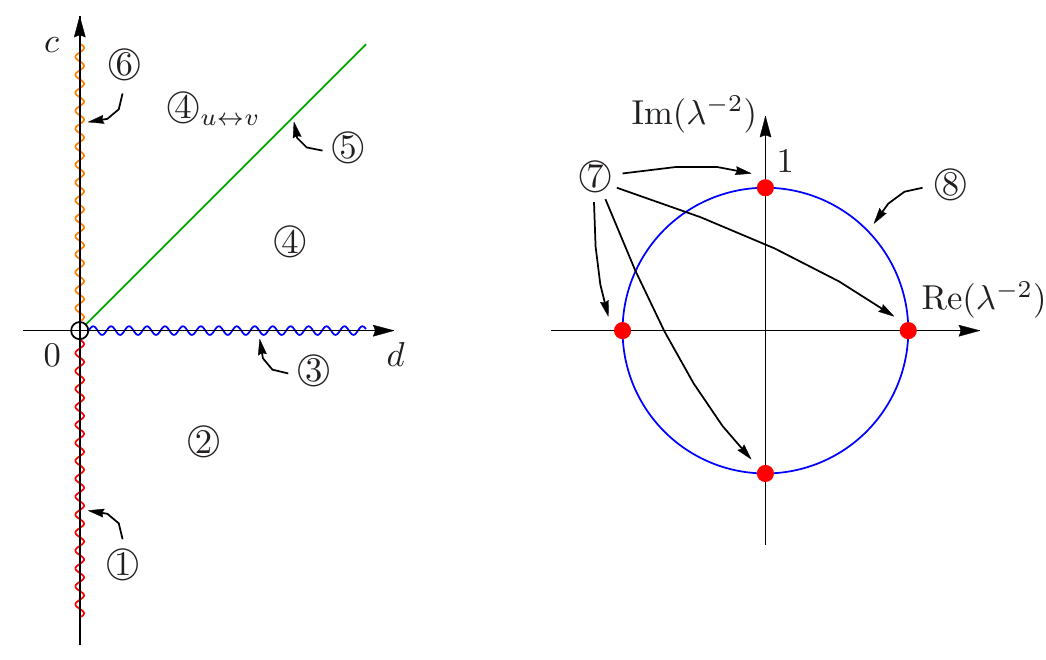}}
     \label{fig:bifurcation}
     \caption{The classification diagrams of non-planar maxfaces with planar curvature lines.}
\end{figure}
\end{theorem}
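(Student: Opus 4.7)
The plan is to split the theorem into the iff-statement characterizing such maxfaces by the shape of $\rho$, and the region-by-region classification in Fig.~\ref{fig:bifurcation}. For the iff, the forward implication is already in hand: the opening lemma of Section~\ref{sec:abresch} equates ``planar curvature lines'' with $\rho_{uv}=0$, and the combined work of Lemma~\ref{lemm:max_solutionRho} together with the subsequent lemmas on zeros and initial conditions of $f$ and $g$ culminate precisely in Proposition~\ref{prop:max_solutionFG}. For the converse, given $\rho$ of one of the forms listed in the proposition, I would reconstruct a realizing maxface by first recovering the unit normal $N$ from $\rho$ via the axial-direction technique developed in the remainder of Section~\ref{sec:abresch}, then reading off Weierstrass data $(h,\eta\,\diff z)$ through Fact~1; the resulting maxface has $\rho_{uv}=0$ by construction and hence planar curvature lines.

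For the bifurcation diagram I would read off the analytic behaviour of $f(u)$ and $g(v)$ from \eqref{eqn:explicitform} in each initial-condition case listed in \eqref{eqn:fivecases}. The sign of $d-c$ decides whether each of $f$ and $g$ is trigonometric (periodic) or hyperbolic/exponential (non-periodic): when $d>c>0$, $f$ is hyperbolic and $g$ is trigonometric; when $c>d>0$ (the same up to the swap $u\leftrightarrow v$), the roles reverse; when $c=d>0$, both $f$ and $g$ are linear; and when $d=0$ one has $g\equiv 0$, making $\rho$ independent of $v$. Matching these behaviours against \eqref{eqn:fivecases} one obtains: $\circlenum{1}$ at $c<0,\,d=0$; $\circlenum{2},\circlenum{3},\circlenum{4}$ at $c<0<d$, $c=0<d$, and $0<c<d$; $\circlenum{5}$ along the diagonal $c=d>0$; and $\circlenum{6}$ at $c>0,\,d=0$ (Case~2 of \eqref{eqn:fivecases}, where the swap turns $f$ trigonometric). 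For Case~(2) of Proposition~\ref{prop:max_solutionFG}, the coordinate and parameter change in Remark~\ref{rema:maxface} identifies $\lambda=\pm1$ (i.e.\ $\phi=0,\pi$) as the rotationally symmetric case, placing $\circlenum{7}$ at the surface of revolution and $\circlenum{8}$ at its associated-family members.

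The main obstacle is lifting these analytic properties of $\rho$ to genuinely geometric statements about the immersion $X$. Since the Hopf differential factor is pinned at $Q=-\tfrac{1}{2}$, both fundamental forms of $X$ are determined by $\rho$, so periodicity of $\rho$ in a coordinate direction forces $X(u+T,v)$ and $X(u,v)$ to share fundamental forms and hence to differ by an isometry of $\mathbb{R}^{2,1}$ (generically a screw motion); analogously, $g\equiv0$ makes $\rho$ independent of $v$ and thereby yields translational invariance of $X$ in the $v$-direction, while the diagonal case $c=d>0$ rules out either symmetry. This promotion can be verified either directly through the Gauss--Weingarten system \eqref{eqn:gaussW} or, more transparently, through the Weierstrass data reconstructed in the iff-part, whose periodicity properties in $u$ or $v$ can be read off immediately from those of $f$ and $g$.
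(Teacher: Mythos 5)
Your proposal is correct and follows essentially the same route as the paper, which states Theorem \ref{theo:bifurcation} without a separate proof precisely because it is the assembly you describe: the equivalence comes from the opening lemma ($\rho_{uv}=0$) together with Lemma \ref{lemm:max_solutionRho} and Proposition \ref{prop:max_solutionFG}, and the periodicity pattern is read off from the trigonometric-versus-hyperbolic dichotomy of $f$ and $g$ governed by the sign of $d-c$ (with $d=0$ giving $g\equiv 0$, and $c=d$ giving the linear case). Your added remark on promoting periodicity of $\rho$ to a rigid-motion periodicity of $X$ via the fundamental theorem (or via the reconstructed Weierstrass data) is exactly the point the paper leaves implicit.
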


\subsection{Axial directions and normal vector}\label{sec:walter}
To find the parametrizations of the surfaces considered, we would like to recover the Weierstrass data from the metric function as follows: We first show the existence of a unique constant direction for surfaces under consideration called the \emph{axial direction}, and use it to calculate the unit normal vector. Then from the unit normal vector, we recover the Weierstrass data. First, we show the existence of the axial direction in the following proposition.

\begin{prop}\label{prop:axial}
If there exists $u_0$ \resp{$v_0$} such that $f(u_0)\neq0$ \resp{$g(v_0) \neq 0$} in Proposition \ref{prop:max_solutionFG}, then there exists a unique non-zero constant vector $\vec{v}_1$ \resp{$\vec{v}_2$} such that
	\[
		\langle m(u,v), \vec{v}_1\rangle = \langle m_v(u,v), \vec{v}_1 \rangle = 0
			\quad\text{\resp{$\langle n(u,v), \vec{v}_2\rangle = \langle n_u(u,v), \vec{v}_2 \rangle = 0$}},
	\]
where $m = \rho^{-2} (X_u \times X_{uu})$ (resp. $n = \rho^{-2} (X_v \times X_{vv})$) and
\begin{equation}\label{eqn:v1}
	\vec{v}_1:=\frac{(\rho_u)^2-\rho\cdot\rho_{uu}}{\rho^2}X_u-\frac{\rho_u\rho_v}{\rho^2}X_v+\frac{\rho_u}{\rho}N
\end{equation}
\[\text{\resp{$\vec{v}_2:=-\frac{\rho_u\rho_v}{\rho^2}X_u+\frac{(\rho_v)^2-\rho\cdot\rho_{vv}}{\rho^2}X_v-\frac{\rho_v}{\rho}N$}}.\]
If $\vec{v}_1$ and $\vec{v}_2$ both exist, then they are orthogonal to each other. We call $\vec{v}_1$ and $\vec{v}_2$ the {\em axial directions} of $X(u,v)$.
\end{prop}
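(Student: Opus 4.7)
The plan is to verify constructively that the vector $\vec{v}_1$ defined by the explicit formula \eqref{eqn:v1} has every property claimed in the statement; the case of $\vec{v}_2$ then follows by the symmetry $u\leftrightarrow v$, and the mutual orthogonality reduces to a single inner-product computation at the end.

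The core step is to show that $\vec{v}_1$ is actually constant, i.e.\ $\partial_u\vec{v}_1 = \partial_v\vec{v}_1 = 0$. First I would differentiate \eqref{eqn:v1} in $v$ and rewrite every derivative of $X_u$, $X_v$, and $N$ using the Gauss--Weingarten equations \eqref{eqn:gaussW}; the planar curvature line condition $\rho_{uv}=0$ from \eqref{eqn:max_pde2} kills all mixed $v$-derivatives of the scalar coefficients (and in particular forces $\rho_{uuv}=0$), so the computation collapses to three coefficient identities in the frame $\{X_u, X_v, N\}$. The $X_u$- and $N$-coefficients cancel directly, while the $X_v$-coefficient simplifies to a scalar multiple of $\rho\Delta\rho - (\rho_u^2 + \rho_v^2) + 1$, which vanishes by the Gauss equation \eqref{eqn:max_pde1}. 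The check that $\partial_u\vec{v}_1 = 0$ is entirely symmetric, using $\rho_{uvv}=0$ at the same stage.

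Once constancy is established, the orthogonality $\langle m,\vec{v}_1\rangle = \langle m_v,\vec{v}_1\rangle = 0$ follows by a short calculation. Starting from $m = \rho^{-2}(X_u\times X_{uu})$ and substituting $X_{uu} = (\rho_u/\rho)X_u - (\rho_v/\rho)X_v + N$ from \eqref{eqn:gaussW}, together with the Lorentzian cross-product identities $X_u\times X_v = -\rho^2 N$ and $X_u\times N = -X_v$, yields $m = -\rho^{-2}X_v + (\rho_v/\rho)N$, and one further $v$-differentiation reduced by the Gauss equation gives $m_v = (\rho_u/\rho^3)X_u + \rho^{-2}\bigl(\rho_u^2 - \rho\rho_{uu}\bigr)N$. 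Pairing either expression with $\vec{v}_1$ using $\langle X_u, X_u\rangle = \langle X_v, X_v\rangle = \rho^2$, $\langle N, N\rangle = -1$ and the pairwise orthogonality of the frame then gives both orthogonality identities immediately. At any point $(u_0, v_0)$ with $f(u_0)\neq 0$, the $N$-coefficient $\rho_u/\rho$ of $\vec{v}_1$ is non-zero so $\vec{v}_1\neq 0$, and at the same point $m$ (supported on $X_v$ and $N$) and $m_v$ (with a non-zero $X_u$-component) are Lorentz-linearly independent; their common orthogonal line is therefore one-dimensional, forcing any other non-zero constant vector meeting both conditions to be a scalar multiple of $\vec{v}_1$.

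Finally, the mutual orthogonality $\langle\vec{v}_1,\vec{v}_2\rangle = 0$ is a single frame-coordinate computation: the $X_u$--$X_v$, $X_v$--$N$, and $X_u$--$N$ cross terms vanish by frame-orthogonality, while the three diagonal contributions combine into $\tfrac{\rho_u\rho_v}{\rho^2}\bigl(\rho\Delta\rho - (\rho_u^2+\rho_v^2) + 1\bigr) = 0$ by the Gauss equation. The main obstacle is not conceptual but purely computational --- keeping the Lorentzian signs straight, and funnelling every step through the twin simplifications $\rho_{uv}=0$ and $\rho\Delta\rho = \rho_u^2 + \rho_v^2 - 1$ at the right moment, is what makes the three coefficient cancellations in the constancy step and the orthogonality verifications line up cleanly.
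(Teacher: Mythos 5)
Your proposal is correct and follows exactly the route the paper indicates (and leaves to the reader): direct computation with the Gauss--Weingarten equations \eqref{eqn:gaussW}, the Gauss equation \eqref{eqn:max_pde1}, and the planar curvature line condition \eqref{eqn:max_pde2}, with the $X_v$-coefficient in the constancy check and the final inner product both collapsing to multiples of the Gauss equation. The supporting computations you sketch (constancy of $\vec{v}_1$, the expressions for $m$ and $m_v$, the uniqueness via linear independence, and $\langle\vec{v}_1,\vec{v}_2\rangle=0$) all check out.
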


\begin{proof}
Similar to the proof of Proposition 2.2 in \cite{cho_deformation_2017}; here we only give an outline of the proof.
Using \eqref{eqn:gaussW}, \eqref{eqn:max_pde1} and \eqref{eqn:max_pde2}, we may calculate that all the required property holds.
%
%
%
\end{proof}

Since $\vec{v}_1$ and $\vec{v}_2$ are constant, we use \eqref{eqn:max_pde1}, \eqref{eqn:max_pde2}, \eqref{eqn:max_solutionRho1}, and \eqref{eqn:max_ode1}--\eqref{eqn:max_ode4} to calculate that
\begin{equation}\label{eqn:v1norm}
	\langle \vec{v}_1, \vec{v}_1\rangle = c, \quad \langle \vec{v}_2 , \vec{v}_2 \rangle = d,
\end{equation}
implying that the axial directions of the surface has the following causalities: if $d > 0$,
\begin{itemize}[label={$\bullet$}]
	\item both $\vec v_1$ and $\vec v_2$ are spacelike if $c > 0$,
	\item $\vec v_1$ is lightlike, but $\vec v_2$ is spacelike if $c = 0$, or
	\item $\vec{v}_1$ is timelike, but $\vec{v}_2$ is spacelike if $c < 0$.
\end{itemize}
Note that if $d = 0$, then $g(v) \equiv 0$, implying that $\vec v_2$ does not exist.
By aligning the axial directions with coordinate axes of the ambient space, we now calculate the unit normal vector.

First, assume $\Delta\rho \not \equiv 0$. Since the definition of $f(u)$ and $g(v)$ depend on the signature of $c - d$, we consider each case separately.

\begin{figure}[H]
    \begin{center}
    \begin{minipage}{0.4\textwidth}
        \centering
        \scalebox{0.9}{\includegraphics{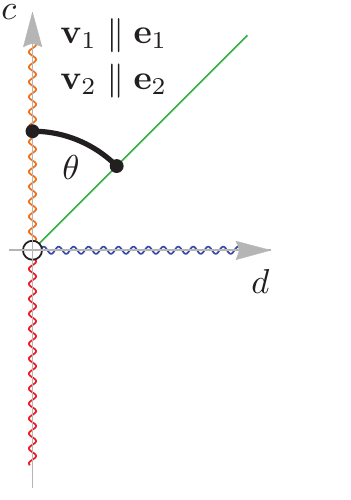}}
    \end{minipage}
    \begin{minipage}{0.4\textwidth}
        \centering
        \scalebox{0.9}{\includegraphics{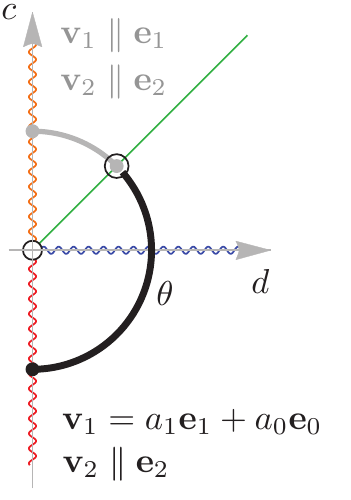}}
    \end{minipage}
    \caption{Choice of parameter and axial directions for cases (1a) and (1b).}
    \label{fig:pathDiag12}
    \end{center}
\end{figure}

\subsubsection{Case (1a)} Assume first that $d - c \leq 0$ (see left side of Fig. \ref{fig:pathDiag12}). Then $\vec{v}_1$ and $\vec{v}_2$ are both spacelike, and we align the axial directions so that $\vec v_1$ and $\vec v_2$ are parallel to $\mathbf e_1$ and $\mathbf e_2$, respectively, where $\mathbf e_i$ are the unit vectors in the $x_i$-direction for $i = 1, 2, 0$. Then, we may calculate the unit normal vector as follows.

\begin{lemm}\label{lemm:normal}
Let $N(u,v) = (N_1, N_2, N_0)$ be the unit normal vector to the surface $X(u,v)$ satisfying case (1) of Proposition \ref{prop:max_solutionFG}. If $c - d \geq 0$, then, the unit normal vector is given by
\[N(u,v) = \left(-\frac{1}{\sqrt{c}}\frac{\rho_u}{\rho},-\frac{1}{\sqrt{d}}\frac{\rho_v}{\rho}, \sqrt{\frac{1}{c}\frac{(\rho_u)^2}{\rho^2} + \frac{1}{d}\frac{(\rho_v)^2}{\rho^2} + 1}\right).\]
\end{lemm}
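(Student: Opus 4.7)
The plan is to exploit the two constant axial directions $\vec{v}_1,\vec{v}_2$ produced in Proposition \ref{prop:axial}. Under the standing hypothesis of Case (1) together with $c - d \geq 0$ (and hence $c \geq d > 0$, since $d = 0$ would force $g \equiv 0$ and $\vec{v}_2$ would fail to exist), identity \eqref{eqn:v1norm} gives $\langle \vec{v}_1, \vec{v}_1 \rangle = c > 0$ and $\langle \vec{v}_2, \vec{v}_2 \rangle = d > 0$, while Proposition \ref{prop:axial} supplies their mutual orthogonality. Thus $\vec{v}_1,\vec{v}_2$ span a two-dimensional spacelike subspace of $\mathbb{R}^{2,1}$, whose orthogonal complement is a one-dimensional timelike line.

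I would then apply a rigid motion of $\mathbb{R}^{2,1}$ to align the axial directions with the standard spacelike axes, setting $\vec{v}_1 = \sqrt{c}\,\mathbf{e}_1$ and $\vec{v}_2 = -\sqrt{d}\,\mathbf{e}_2$ (the signs encoding the orientation convention depicted in the left half of Fig.~\ref{fig:pathDiag12}), and I would choose $N$ to take values in the upper sheet $\mathbb{H}^2_+$ so that $N_0 > 0$. Because $\vec{v}_1,\vec{v}_2$ are globally constant vectors in $\mathbb{R}^{2,1}$, this frame is valid uniformly on $\Sigma$.

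With the frame fixed, the first two components of $N$ are read off from Lorentzian inner products. Using the explicit formula \eqref{eqn:v1} for $\vec{v}_1$, together with $\langle N,X_u\rangle = \langle N,X_v\rangle = 0$ and $\langle N,N\rangle = -1$, I would compute
\[
\sqrt{c}\,N_1 \;=\; \langle N,\vec{v}_1\rangle \;=\; \frac{\rho_u}{\rho}\,\langle N,N\rangle \;=\; -\frac{\rho_u}{\rho},
\]
and the analogous manipulation with the expression for $\vec{v}_2$ would yield $-\sqrt{d}\,N_2 = \rho_v/\rho$. These give $N_1 = -\tfrac{1}{\sqrt{c}}\tfrac{\rho_u}{\rho}$ and $N_2 = -\tfrac{1}{\sqrt{d}}\tfrac{\rho_v}{\rho}$, matching the claim.

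The third component is then forced by the normalisation $N_1^2 + N_2^2 - N_0^2 = -1$; solving for $N_0$ and taking the positive square root, legitimate because $N \in \mathbb{H}^2_+$, delivers the stated expression for $N_0$. I do not anticipate a substantive obstacle: the argument reduces to three inner-product evaluations, and the only careful bookkeeping lies in fixing the sheet of $\mathbb{H}^2$ and the axis orientations of $\vec{v}_1,\vec{v}_2$ in accordance with Fig.~\ref{fig:pathDiag12}, both of which are absorbed into the single rigid motion applied at the outset.
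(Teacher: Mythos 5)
Your proof is correct, and it rests on the same structural idea as the paper's --- align the constant axial directions $\vec{v}_1, \vec{v}_2$ with the spacelike coordinate axes and then use $\langle N, N\rangle = -1$ to recover the third component --- but the mechanism by which you extract $N_1$ and $N_2$ is genuinely different and, in fact, more economical. The paper (following Proposition 2.3 of \cite{cho_deformation_2017}) works with the plane-normal vectors $m = \rho^{-2}(X_u\times X_{uu})$ and $n = \rho^{-2}(X_v\times X_{vv})$ and the orthogonality relations $\langle m,\vec{v}_1\rangle = \langle m_v,\vec{v}_1\rangle = 0$ and $\langle n,\vec{v}_2\rangle = \langle n_u,\vec{v}_2\rangle = 0$, which requires expanding the cross products via the Gauss--Weingarten equations \eqref{eqn:gaussW}. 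You instead pair $N$ directly against the frame expansion \eqref{eqn:v1} of $\vec{v}_1$: since $\langle N, X_u\rangle = \langle N, X_v\rangle = 0$, only the $N$-coefficient $\rho_u/\rho$ survives, giving $\sqrt{c}\,N_1 = \langle N, \vec{v}_1\rangle = -\rho_u/\rho$ in one line, and likewise for $N_2$; the orientation signs are absorbed into the rigid motion and the choice of sheet of $\mathbb{H}^2$. Your preliminary observation that the hypothesis forces $c \geq d > 0$ --- so that both axial directions exist, are spacelike, and span a spacelike plane with timelike orthogonal complement --- is exactly the point the paper leaves implicit, and it is needed for the stated formula to be well defined. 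The two arguments buy the same conclusion; yours is shorter because the explicit frame decomposition of the axial directions is already available from Proposition \ref{prop:axial}, so no further use of \eqref{eqn:gaussW} is required.
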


\begin{proof}
Similar to the proof of Proposition 2.3 in \cite{cho_deformation_2017}; we use the fact that $\langle m(u,v), \vec{v}_1\rangle = \langle m_v(u,v), \vec{v}_1 \rangle = 0$ and $\langle n(u,v), \vec{v}_2\rangle = \langle n_u(u,v), \vec{v}_2 \rangle = 0$ in Proposition \ref{prop:axial}, and the fact that $\langle N, N \rangle = -1$.
%
%
\end{proof}


Now, let $d = r \cos\theta$ and $c = r \sin\theta$ for $\theta \in \left[\frac{\pi}{4},\frac{\pi}{2}\right]$ (see left side of Fig. \ref{fig:pathDiag12}). Since $r$ is a homothety factor of domain $(u,v)$-plane by \eqref{eqn:explicitform}, we may assume $r = 1$. Using the above lemma, we may find the normal vector $N^{\theta}(u,v)$ dependent on $\theta$. On the other hand, since the meromorphic function $h(u,v)$ of the Weierstrass data is equal to the normal vector function under stereographic projection, and since $Q=-\frac{1}{2}(h_u-ih_v)\eta=-\frac{1}{2}$,
\[h(u,v) = \frac{1}{1-N_0(u,v)}(N_1(u,v)+i N_2(u,v)), \quad \eta(u,v) = \frac{1}{h_u - i h_v}.\]
Therefore, using $N^{\theta}(u,v)$, we calculate that for $\theta \in \left[\frac{\pi}{4},\frac{\pi}{2}\right]$,
\begin{equation}\label{eqn:wData1a}
	\begin{aligned}
		h^{\theta}(z) &=
			\begin{cases}
				 \dfrac{A_2 \tan{\left(\frac{1}{2}(A_1 z + A_3)\right)}}{A_1}, &\text{if $\theta \neq \frac{\pi}{4}$}\\
				 2^{-1/4} z + 1, &\text{if $\theta = \frac{\pi}{4}$}
			\end{cases}\\
		\eta^{\theta}(z) &=
			\begin{cases}
				\dfrac{\cos^2{\left(\frac{1}{2}(A_1 z + A_3)\right)}}{A_2}, &\text{if $\theta \neq \frac{\pi}{4}$}\\
				2^{-3/4}, &\text{if $\theta = \frac{\pi}{4}$}
			\end{cases}
	\end{aligned}
\end{equation}
where $A_1(\theta) = \sqrt{\sin\theta - \cos\theta}$, $A_2(\theta) = \sqrt{\cos\theta} + \sqrt{\sin\theta}$, and $A_3(\theta) = \tan^{-1}\left(\sqrt{\tan\theta - 1}\right)$.

\subsubsection{Case (1b)} Now assume that $d - c > 0$ (see right side of Fig. \ref{fig:pathDiag12}). Then, since the causality of $\vec v_1$ changes while that of $\vec v_2$ is always spacelike, we let
	\begin{equation}\label{eqn:a1a0}
		\vec v_1 = a_1\mathbf e_1 + a_0\mathbf e_0
	\end{equation}
for some real constants $a_0$ and $a_1$, while we let $\vec v_2$ be parallel to $\mathbf e_2$. To calculate the unit normal vector $N$ for this case, we first need the following lemma.
\begin{lemm}\label{lemm:normal1b}
	Let $N(u,v) = (N_1, N_2, N_0)$ be the unit normal vector to the surface $X(u,v)$ satisfying case (1) of Proposition \ref{prop:max_solutionFG}.
	If $d - c > 0$, then
		\begin{equation}\label{eqn:n1n0}
			a_1N_1 - a_0N_0 =  -\frac{\sqrt{a_1^2 - a_0^2}}{\sqrt{c}}\,\frac{\rho_u}{\rho}, \quad N_2 = -\frac{1}{\sqrt{d}}\,\frac{\rho_v}{\rho}
		\end{equation}
	where $a_1$ and $a_0$ are as in \eqref{eqn:a1a0}.
\end{lemm}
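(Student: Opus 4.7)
The plan is to mimic the proof of Lemma \ref{lemm:normal} but with the modified alignment of axial directions needed when $d - c > 0$. The point is that the Lorentzian inner product $\langle \vec v_1 , N \rangle$ can be computed in two different ways: once by plugging in the explicit representation \eqref{eqn:v1} of $\vec v_1$ in terms of the frame $\{X_u, X_v, N\}$, and once using the coordinate expression $\vec v_1 = a_1 \mathbf e_1 + a_0 \mathbf e_0$. Matching the two expressions immediately yields the identity for $a_1 N_1 - a_0 N_0$, and the analogous computation for $\vec v_2$ yields the identity for $N_2$.

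More concretely, I would first note that since $N$ is a unit normal to the surface, $\langle X_u, N \rangle = \langle X_v, N \rangle = 0$ and $\langle N, N \rangle = -1$, so that plugging \eqref{eqn:v1} in gives
\[
\langle \vec v_1, N \rangle \;=\; \frac{\rho_u}{\rho}\langle N, N\rangle \;=\; -\frac{\rho_u}{\rho}.
\]
On the other hand, writing $\vec v_1 = a_1 \mathbf e_1 + a_0 \mathbf e_0$ in the ambient Lorentzian coordinates yields $\langle \vec v_1, N \rangle = a_1 N_1 - a_0 N_0$. Equating the two gives $a_1 N_1 - a_0 N_0 = -\rho_u/\rho$, and then using $\langle \vec v_1, \vec v_1 \rangle = a_1^2 - a_0^2 = c$ from \eqref{eqn:v1norm}, I can rewrite the right-hand side as $-\frac{\sqrt{a_1^2 - a_0^2}}{\sqrt{c}}\frac{\rho_u}{\rho}$, producing the first equation of \eqref{eqn:n1n0}.

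For the second equation, the argument is parallel: using the explicit formula for $\vec v_2$ given in Proposition \ref{prop:axial} together with $\langle X_u, N \rangle = \langle X_v, N\rangle = 0$ and $\langle N, N\rangle = -1$ gives $\langle \vec v_2, N \rangle = \rho_v / \rho$. Since in this case $\vec v_2$ is aligned parallel to $\mathbf e_2$ with $\langle \vec v_2, \vec v_2 \rangle = d$, we have $\vec v_2 = \pm \sqrt{d}\,\mathbf e_2$; picking the orientation consistent with the choice in Lemma \ref{lemm:normal} (so that the sign matches the formula $N_2 = -\rho_v/(\sqrt{d}\,\rho)$), the coordinate expression of the inner product gives $\pm\sqrt{d}\,N_2 = \rho_v/\rho$, which rearranges to the claimed identity for $N_2$.

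The main obstacle is largely bookkeeping rather than a conceptual difficulty: one has to verify that the axial directions $\vec v_1$ and $\vec v_2$ truly are constant (this is guaranteed by Proposition \ref{prop:axial}, which is already proved) so that the identification with $a_1 \mathbf e_1 + a_0 \mathbf e_0$ and $\pm\sqrt{d}\,\mathbf e_2$ makes sense globally, and to keep track of the sign conventions for $\mathbf e_0$ in the Lorentzian inner product and for the orientation of $\vec v_2$. Once these are handled, the identities follow immediately from pairing the constant axial vectors against $N$ in two ways.
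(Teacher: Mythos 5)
Your proposal is correct, and it reaches the conclusion by a more direct route than the paper. The paper's proof first invokes the techniques behind Lemma \ref{lemm:normal} to establish only the proportionality $a_1N_1 - a_0N_0 = D_2\,\rho_u/\rho$ for an undetermined constant $D_2$, and then pins down $D_2$ by assembling the three relations $c = \|((\rho_u)^2-\rho\rho_{uu})N_u + \rho_u\rho_v N_v + \tfrac{\rho_u}{\rho}N\|^2$, \eqref{eqn:n1n0withB}, and $N_1^2 + \tfrac{1}{d}\tfrac{\rho_v^2}{\rho^2} - N_0^2 = -1$ and evaluating them at the special point $\left(0, \tfrac{\pi}{2\sqrt{d-c}}\right)$. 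You instead observe that the frame expression \eqref{eqn:v1} already displays the $N$-component of $\vec{v}_1$ as $\tfrac{\rho_u}{\rho}$, so that $\langle \vec{v}_1, N\rangle = -\tfrac{\rho_u}{\rho}$ follows from $\langle X_u,N\rangle = \langle X_v,N\rangle = 0$ and $\langle N,N\rangle = -1$ alone; comparing with $\langle a_1\mathbf{e}_1 + a_0\mathbf{e}_0, N\rangle = a_1N_1 - a_0N_0$ and using $a_1^2 - a_0^2 = \langle\vec{v}_1,\vec{v}_1\rangle = c$ from \eqref{eqn:v1norm} gives the constant for free, and the $\vec{v}_2$ computation is identical in spirit. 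This buys a shorter argument with no evaluation at a distinguished point, at the cost of two small bookkeeping items that you correctly flag: the constancy of the axial directions (Proposition \ref{prop:axial}) is what legitimizes equating the frame and coordinate computations, and the sign of $N_2$ is fixed only after choosing the orientation of $\vec{v}_2$ along $\mathbf{e}_2$, exactly as the paper does implicitly. The only degenerate value is $c = 0$, where the written ratio $\sqrt{a_1^2-a_0^2}/\sqrt{c}$ is formally $0/0$; there your identity $a_1N_1 - a_0N_0 = -\rho_u/\rho$ is in fact the cleaner statement and is the form the paper itself uses later for the lightlike case.
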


\begin{proof}
Employing similar techniques to those used in the proof of Proposition 2.3 in \cite{cho_deformation_2017} and Lemma \ref{lemm:normal} implies that
\begin{equation}\label{eqn:n1n0withB}
	a_1N_1 - a_0N_0 = D_2\cdot\frac{\rho_u}{\rho}.
\end{equation}
for some constant $D_2$. To find $D_2$ in \eqref{eqn:n1n0withB}, consider the following system of equations,
\[\begin{cases}
		c = \|((\rho_u)^2-\rho\cdot\rho_{uu})N_u + \rho_u\rho_vN_v+\frac{\rho_u}{\rho}N\|^2\\
		a_1N_1 - a_0N_0 = D_2\cdot\frac{\rho_u}{\rho}\\
		N_1^2 + \frac{1}{d}\,\frac{\rho_v^2}{\rho^2} - N_0^2 = -1
\end{cases}\]
where the first equation comes from \eqref{eqn:gaussW}, \eqref{eqn:v1}, and \eqref{eqn:v1norm}. Since $D_2$ is constant, we may solve for $D_2$ at the point $\left(0, \frac{\pi}{2\sqrt{d-c}}\right)$ to get \eqref{eqn:n1n0}.
\end{proof}

Since $N$ is unit length, \eqref{eqn:n1n0} lets us calculate the normal of the surface for any given $a_0$, $a_1$, $c$, and $d$. Similar to the previous case, let $d = \cos\theta$ and $c = \sin\theta$ for $\theta \in \left[-\frac{\pi}{2},\frac{\pi}{4}\right)$, and further let
\[a_0 = \sqrt{\cos\theta - \sin\theta}, \quad a_1 = \sqrt{\cos\theta}.\]
Now, we calculate the normal vector, and find that for $\theta \in \left[-\frac{\pi}{2},\frac{\pi}{4}\right)$,
\begin{equation}\label{eqn:wData1b}
	h^{\theta}(z) = \dfrac{B_2 e^{B_1 z} - 1}{B_2 - 1}, \quad \eta^{\theta}(z) = \dfrac{(B_2 - 1) e^{-B_1 z}}{2B_1 B_2}
\end{equation}
where $B_1(\theta) = \sqrt{\cos\theta - \sin\theta}$ and $B_2(\theta)  = 1 + \sqrt{1 - \tan\theta}$.

\subsubsection{Case (2)} Finally, we consider the case when $\Delta \rho \equiv 0$. By Remark \ref{rema:maxface}, we only need to consider $\rho(u,v)=u$, and then utilize the parameter $\lambda\in\mathbb{S}^{1}\subset\mathbb{C}$ for the associated family for $\eta(u,v)$. First we assume that the lightlike axis $\vec{v}_1 = \mathbf{e}_1 + \mathbf {e}_0$.
Then similar to the previous cases,
\[N_1 - N_0 = D_3\cdot\frac{\rho_u}{\rho} = D_3\cdot\frac{1}{u}\]
for some real constant $D_3$. By applying the scaling of $u$, without loss of generality, we can assume $D_3 = -1$. Now since $\rho(u,v)=u$ is independent of $v$, we notice that $N(u,v)$ has the form of $N(u,v)=\left(N_1(u),\ 0,\ N_0(u)\right)\cdot T(v)$ for a specific isometry transform $T(v)\in \text{SO}_{2,1}$ keeping the lightlike axis $\mathbf{e}_1 + \mathbf {e}_0$. Thus we get the following:
\begin{lemm}
For case (2) in Proposition \ref{prop:max_solutionFG}, the unit normal vector $N$ of $\tilde{X}$ is given by
\[\begin{aligned}
		N(u,v)&= \left(\frac{u^2 - 1}{2u},\ 0 ,\ \frac{u^2 + 1}{2u}\right)\cdot
			\begin{pmatrix}
				1-\frac{v^2}{2} & v & -\frac{v^2}{2}\\
				-v & 1 & -v\\
				\frac{v^2}{2} & -v & 1+\frac{v^2}{2}
			\end{pmatrix}\\
			&= \left(\frac{u^2 + v^2 - 1}{2u},\ -\frac{v}{u},\ \frac{u^2 + v^2 + 1}{2u}\right).
\end{aligned}\]
\end{lemm}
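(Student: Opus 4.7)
The plan is to compute $N(u,v)$ in two stages: first determine the profile $N(u,0)$ from purely algebraic constraints, then propagate in the $v$-direction by an isometry of $\mathbb{R}^{2,1}$ fixing the null axial direction $\vec{v}_1$.

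For the first stage, I would combine three conditions on $N$. The unit timelike condition gives $N_1^2 + N_2^2 - N_0^2 = -1$. The axial-direction constraint $N_1 - N_0 = -1/u$ follows by exactly the same computation that produced Lemma \ref{lemm:normal1b}, using that $\vec{v}_1 = \mathbf{e}_1 + \mathbf{e}_0$ is lightlike and that $\rho_u = 1$, $\rho_v = 0$; the normalization $D_3 = -1$ comes from rescaling $u$ as noted just before the statement. These two equations leave one scalar degree of freedom in $N$ pointwise. Since $g(v) \equiv 0$ in Case (2), the second axial direction $\vec{v}_2$ does not exist, and the remaining freedom in aligning the ambient frame is precisely the $1$-parameter stabilizer of $\vec{v}_1$ in $SO_{2,1}$, namely the null-rotation subgroup. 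Using this freedom to normalize the $v = 0$ slice we may set $N_2(u, 0) = 0$, and the two algebraic equations then force
\[
	N(u, 0) = \left(\frac{u^2 - 1}{2u},\ 0,\ \frac{u^2 + 1}{2u}\right).
\]

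For the second stage, the key point is that $\rho(u,v) = u$ is independent of $v$, so the Gauss-Weingarten system \eqref{eqn:gaussW} is invariant under translations $v \mapsto v + v_0$. Consequently, for each $v_0 \in \mathbb{R}$ there is an ambient isometry $T(v_0) \in SO_{2,1}$ carrying the $v = 0$ data of $X$ to the $v = v_0$ data. Because $\vec{v}_1$ is a constant of the surface (not a function of $v$), each $T(v_0)$ must stabilize the null vector $\mathbf{e}_1 + \mathbf{e}_0$, so the one-parameter family $\{T(v)\}_{v \in \mathbb{R}}$ is contained in the null-rotation subgroup fixing this vector. Writing this subgroup in matrix form yields the explicit $T(v)$ displayed in the statement; the parametrization is pinned down by comparing the Weingarten relation $N_v = -u^{-2} X_v$ and the conformality $\langle X_v, X_v\rangle = u^2$ at $v = 0$.

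Combining the two stages gives $N(u, v) = N(u, 0) \cdot T(v)$, and the explicit form stated in the lemma follows by direct matrix multiplication. A short sanity check then verifies that this $N$ satisfies $\langle N, N\rangle = -1$ and $N_1 - N_0 = -1/u$ globally, as well as the integrability identity $(u N_v)_u = 0$ that arises from $X_{uv} = X_{vu}$. The main obstacle is the symmetry reduction in the second stage: one must carefully justify that the $v$-translation invariance of the metric lifts to a $1$-parameter family of genuine ambient isometries, argue that each of these isometries fixes $\vec{v}_1$, and identify the parametrization of the null-rotation subgroup compatible with the Weierstrass setup. Once $T(v)$ is in hand in the correct parametrization, the remainder is bookkeeping.
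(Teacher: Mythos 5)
Your proposal is correct and follows essentially the same route as the paper, whose (very brief) argument consists precisely of the two ingredients you use: the relation $N_1-N_0=-1/u$ normalized by rescaling $u$, followed by the observation that $v$-independence of $\rho(u,v)=u$ forces $N(u,v)=(N_1(u),0,N_0(u))\cdot T(v)$ for a null rotation $T(v)\in\mathrm{SO}_{2,1}$ fixing $\mathbf{e}_1+\mathbf{e}_0$, so your two-stage reconstruction actually supplies more justification than the paper records. The one step I would tighten is the normalization $N_2(u,0)\equiv 0$: a single ambient null rotation absorbs one constant, not a function of $u$, so to kill $N_2$ along the whole $v=0$ slice you should invoke the Weingarten relation $N_u=u^{-2}X_u$ together with the planarity of that curvature line to see that $N(u,0)$ already lies in a fixed timelike $2$-plane before spending the group freedom.
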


Using the above proposition, we obtain the following Weierstrass data, up to the homothety of domain:
\begin{equation}\label{eq:wData2}
	\tilde{h}(z) = -\frac{1 + z}{-1 + z}, \quad \tilde{\eta}(z) = \dfrac{1}{4}\,{\left( -1 + z \right) }^2.
\end{equation}
By changing data $(\tilde{h}, \tilde{\eta})\mapsto(\tilde{h},\lambda^{-2}\tilde{\eta})$, we get all maxfaces in case (2) with parameter $\lambda\in\mathbb{S}^{1}$.

Since the data obtained all satisfy the meromorphicity and holomorphicity conditions, we may use the Weierstrass-type representation for maxfaces to obtain the following parametrizations.

\begin{theorem} \label{theo:classification}
If $X(u,v)$ is a non-planar maxface with planar curvature lines in $\mathbb{R}^{2,1}$, then the surface is given by the following parametrization on its domain for some $\theta \in \left[-\frac{\pi}{2},\frac{\pi}{2}\right]$:
\begin{equation}\label{eqn:parametrization1}
X^{\theta}(u,v) =
\begin{cases}
	\begin{pmatrix}
		\frac{(A_1^2 + A_2^2) A_1 u + (A_1^2 - A_2^2)\sin{(A_1 u + A_3)}\cosh{(A_1 v)} + (A_2^2 - A_1^2)\sin A_3}{2 A_1^3 A_2}\\[5pt]
		\frac{(A_2^2 - A_1^2) A_1 v - (A_1^2 + A_2^2)\cos{(A_1 u + A_3)}\sinh{(A_1 v)}}{2 A_1^3 A_2}\\[5pt]
		\frac{\cos{(A_1 u + A_3)}\cosh{(A_1 v)} - \cos{A_3}}{A_1^2}
	\end{pmatrix}^t,&\text{if $\theta \in \left(\frac{\pi}{4},\frac{\pi}{2}\right]$}\\[28pt]

	\begin{pmatrix}
		\frac{e^{-B_1 u}\big\{\big(B_2(B_2(e^{2B_1}-1) + 2) - 2\big)\cos(B_1 v) - 2 e^{B_1 u}\big(B_1 B_2 u + B_2 - 1\big)\big\}}{2(B_1)^2 B_2 (B_2 - 1)}\\[5pt]
		\frac{e^{-B_1 u}\big\{(B_2 e^{2B_1 u} - B_2 + 2)\sin{(B_1 v)} -2 B_1 v e^{B_1 u}\big\}}{2(B_1)^2 (B_2 - 1)}\\[5pt]
		-\frac{B_1 B_2 u + e^{-B_1 u} \cos{(B_1 v)} - 1}{(B_1)^2 B_2} \\
	 \end{pmatrix}^t, &\text{if $\theta \in \left[-\frac{\pi}{2},\frac{\pi}{4}\right)$}\\[28pt]

	\frac{1}{\sqrt{2}}\left(
 		\hat{u} - \hat{u} \hat{v}^2 + \frac{1}{3}\hat{u}^3 - \frac{4}{3},\
		- \hat{v} + \hat{u}^2 \hat{v} - \frac{1}{3} \hat{v}^3,\
		-\hat{u}^2 + \hat{v}^2 + 1
	\right),&\text{if $\theta = \frac{\pi}{4}$}\\
\end{cases}
\end{equation}

%
%
%

where $(\hat{u}, \hat{v})$ is given by $u = 2^{1/4}(\hat{u} - 1)$ and $v = 2^{1/4}\hat{v}$, and
\[\begin{cases}
	A_1(\theta) = \sqrt{\sin\theta - \cos\theta}, A_2(\theta) = \sqrt{\cos\theta} + \sqrt{\sin\theta}, A_3(\theta) = \tan^{-1}\left(\sqrt{\tan\theta - 1}\right)\\
	B_1(\theta) = \sqrt{\cos\theta - \sin\theta}, B_2(\theta)  = 1 + \sqrt{1 - \tan\theta};
\end{cases}\]
or for some $\lambda^{-2} \in \mathbb{S}^1$,
\begin{equation}\label{eqn:parametrization2}
\tilde{X}^\lambda(u,v) =
	\frac{\re (\lambda^{-2})}{2} \begin{pmatrix}
		u - uv^2 + \frac{1}{3}u^3\\
		2uv\\
		-u - uv^2 + \frac{1}{3}u^3
	\end{pmatrix}^t -
	\frac{\im (\lambda^{-2})}{2} \begin{pmatrix}
		v + u^2 v - \frac{1}{3}v^3\\
		- u^2 + v^2\\
		-v + u^2v - \frac{1}{3}v^3
	\end{pmatrix}^t
\end{equation}
up to isometries and homotheties of $\mathbb{R}^{2,1}$.
In fact, it must be a piece of one, and only one, of the following:
\begin{itemize}[label={$\bullet$}]
	\item maximal Enneper-type surface \textup{(E)} with Weierstrass data $(2^{-1/4} z + 1, 2^{-3/4} \diff z)$, $(\theta = \frac{\pi}{4})$,
	\item maximal catenoid with lightlike axis \textup{(C\textsubscript{L})} with Weierstrass data $\left(-\frac{1 + z}{-1 + z}, \frac{1}{4}\,{\left( -1 + z \right) }^2 \diff z \right)$, or a member of its associated family $(\lambda \in \mathbb{S}^1)$,
	\item maximal catenoid with spacelike axis \textup{(C\textsubscript{S})} with Weierstrass data \\$\left(\tan\left(\frac{1}{4}(\pi + 2z)\right), \frac{1}{2}(1 - \sin z) \diff z\right)$, $(\theta = \frac{\pi}{2})$,
	\item maximal catenoid with timelike axis \textup{(C\textsubscript{T})} with Weierstrass data $\left(e^z, \frac{1}{2}e^{-z} \diff z\right)$, $(\theta = -\frac{\pi}{2})$,
	\item maximal Bonnet-type surface with lightlike axial direction \textup{(B\textsubscript{L})} with Weierstrass data \\$\left(2e^z -1, \frac{1}{4}e^{-z} \diff z \right)$, $\left(\theta = 0\right)$,
	\item maximal Bonnet-type surface with spacelike axial direction \textup{(B\textsubscript{S})} with Weierstrass data
	\begin{gather*}
		\left\{\left(\tfrac{A_2 \tan{\left(\tfrac{1}{2}(A_1 z + A_3)\right)}}{A_1}, \tfrac{\cos^2{\left(\tfrac{1}{2}(A_1 z + A_3)\right)}}{A_2} \diff z\right) :  \theta \in \left(\tfrac{\pi}{4},\tfrac{\pi}{2}\right)\right\}, \text{or} \\ 
		\left\{\left(\tfrac{B_2 e^{B_1 z} - 1}{B_2 - 1}, \tfrac{(B_2 - 1) e^{-B_1 z}}{2B_1 B_2} \diff z\right) : \theta \in \left(0,\tfrac{\pi}{4}\right)\right\},
	\end{gather*}
	\item or maximal Bonnet-type surface with timelike axial direction \textup{(B\textsubscript{T})} with Weierstrass data
		\[
			\left\{\left(\tfrac{B_2 e^{B_1 z} - 1}{B_2 - 1}, \tfrac{(B_2 - 1) e^{-B_1 z}}{2B_1 B_2} \diff z\right) : \theta \in \left(-\tfrac{\pi}{2},0\right)\right\}.
		\]
\end{itemize}
Moreover, $X^\theta (u,v)$ is continuous at every point $(u, v)$ with respect to the parameter $\theta$.
\end{theorem}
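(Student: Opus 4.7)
The plan is to obtain the parametrizations by integrating the Weierstrass data already computed in \eqref{eqn:wData1a}, \eqref{eqn:wData1b}, and \eqref{eq:wData2}, identify the classical surfaces that appear at the distinguished values of $\theta$ and $\lambda$, and finally verify continuity in the parameter. By Proposition \ref{prop:max_solutionFG} together with the axial-direction analysis in Section \ref{sec:walter}, every non-planar maxface with planar curvature lines is, up to isometries and homotheties of $\mathbb{R}^{2,1}$, one of the surfaces produced by these Weierstrass pairs; hence the classification will follow once the parametrizations are made explicit.

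First I would apply the Weierstrass-type representation formula $X = \re \int (1+h^{2},\, i(1-h^{2}),\, -2h)\,\eta\,\diff z$ to the three families of data. For Case (1a) with $\theta\in(\frac{\pi}{4},\frac{\pi}{2}]$ the integrand is rational in $\sin$, $\cos$ of $A_{1}z+A_{3}$, so a direct antiderivative produces the first row of \eqref{eqn:parametrization1}. For Case (1b) with $\theta\in[-\frac{\pi}{2},\frac{\pi}{4})$ the integrand decomposes into sums of $e^{\pm B_{1}z}$ and polynomials in $z$, giving the second row. The case $\theta=\frac{\pi}{4}$ can be obtained either by specialising $(h,\eta\,\diff z)=(2^{-1/4}z+1, 2^{-3/4}\,\diff z)$ and integrating termwise, or by rescaling $(u,v)\mapsto(\hat u,\hat v)$ and observing that the result is a translate of the maximal Enneper surface; this yields the third row. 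Case (2) with Weierstrass data \eqref{eq:wData2} is handled analogously, producing \eqref{eqn:parametrization2} after substituting $(h,\eta\,\diff z)\mapsto(h,\lambda^{-2}\eta\,\diff z)$.

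Next I would identify the named surfaces. At $\theta=\frac{\pi}{4}$ the data give maximal Enneper of type \textup{(E)}; at $\theta=\frac{\pi}{2}$ one has $A_{1}=1$, $A_{2}=1$, $A_{3}=\frac{\pi}{4}$ and the Weierstrass pair collapses to $(\tan(\frac14(\pi+2z)), \frac12(1-\sin z)\,\diff z)$, giving \textup{(C\textsubscript{S})}; at $\theta=-\frac{\pi}{2}$ one obtains $B_{1}=1$, $B_{2}=1+\sqrt 2$ and after absorbing a homothety one recovers $(e^{z}, \frac12 e^{-z}\,\diff z)$, giving \textup{(C\textsubscript{T})}; at $\theta=0$ the data reduce to $(2e^{z}-1,\frac14 e^{-z}\,\diff z)$, giving \textup{(B\textsubscript{L})}. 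The remaining $\theta\in(0,\frac{\pi}{4})\cup(\frac{\pi}{4},\frac{\pi}{2})$ give Bonnet-type surfaces whose axial-direction character is determined by the sign of $c$ via \eqref{eqn:v1norm}, producing the bifurcation into \textup{(B\textsubscript{S})} and \textup{(B\textsubscript{T})}. Case (2) for $\lambda^{-2}\in\mathbb{S}^{1}$ gives the associated family of \textup{(C\textsubscript{L})}.

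The main obstacle will be establishing continuity in $\theta$ across the transition point $\theta=\frac{\pi}{4}$, where the functional form of $(h^{\theta},\eta^{\theta}\,\diff z)$ switches from the trigonometric/hyperbolic expression \eqref{eqn:wData1a} to the exponential expression \eqref{eqn:wData1b}. I would handle this by expanding both parametrizations in \eqref{eqn:parametrization1} as power series in $(\theta-\frac{\pi}{4})$ near that value, using $A_{1}=\sqrt{\sin\theta-\cos\theta}$ and $B_{1}=\sqrt{\cos\theta-\sin\theta}$, and showing that each row converges to the Enneper-type parametrization in the third row of \eqref{eqn:parametrization1} under the change of variable $u=2^{1/4}(\hat u-1)$, $v=2^{1/4}\hat v$. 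The $\frac{1}{A_{1}^{3}}$ and $\frac{1}{B_{1}^{3}}$ prefactors look singular as $\theta\to\frac{\pi}{4}$, but the Taylor expansions of the bracketed terms vanish to sufficient order that the limits exist and agree; this calculation, while elementary, is the only non-routine step. Continuity on the open arcs $(\frac{\pi}{4},\frac{\pi}{2}]$ and $[-\frac{\pi}{2},\frac{\pi}{4})$ is immediate from the real-analyticity of the integrated expressions, concluding the proof.
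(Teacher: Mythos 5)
Your proposal follows essentially the same route as the paper: the Weierstrass data were already derived in Section \ref{sec:walter} from the metric function and the axial directions, and the theorem is obtained by integrating them via the Weierstrass-type representation, identifying the named surfaces at the distinguished parameter values (with the causal character of the axial directions read off from \eqref{eqn:v1norm}), and checking continuity in $\theta$ --- which the paper merely asserts, while you outline the series expansion at $\theta = \tfrac{\pi}{4}$ explicitly. Two small arithmetic slips in your identification step --- at $\theta = \tfrac{\pi}{2}$ one has $A_3 = \tfrac{\pi}{2}$, not $\tfrac{\pi}{4}$, and at $\theta = -\tfrac{\pi}{2}$ one has $B_2 \to \infty$ rather than $B_2 = 1+\sqrt{2}$, so that case is a limit --- do not affect the conclusions, since the limiting Weierstrass pairs you state are the correct ones.
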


Note that by \eqref{eqn:v1norm}, we see that the different classes of maximal Bonnet-type surfaces mentioned in \cite{leite_surfaces_2015} have a geometric meaning; namely, the causal character of the axial directions are different. Finally, it should be noted that a catenoid with timelike axis is indeed a limiting case of maximal Bonnet-type surfaces with timelike axial direction, while a catenoid with spacelike axis is a limiting case of maximal Bonnet-type surfaces with spacelike axial direction.

\section{Continuous deformation of maximal surfaces with planar curvature lines} \label{sec:deformation}
Now, we show that all maxfaces with planar curvature lines can be conjoined by a single continuous deformation.
However, as seen in the previous section, the Weierstrass data and the parametrizations of such surfaces depended on two separate parameters $\theta$ and $\lambda$.
Therefore, we need to show that there exists a deformation consisting of maxfaces with planar curvature lines that connects the surfaces in each parameter family. 
In addition, it must be verified that the plane can also be attained as a limit of such surfaces.

Therefore, in this section, we explain how all the maxfaces with planar curvature lines can be joined by a series of continuous deformations.
We consider a deformation to be ``continuous'' with respect to a parameter, if the deformation dependent on the parameter converges uniformally over compact subdomains component-wise.
In fact, it will be enough to show that each component function in the parametrization is continuous for the parameter at any point $(u,v)$ in the domain.

\subsection{Deformation to the maximal catenoid with lightlike axis} First, we will show that there exists a deformation between a maximal Bonnet-type surface with lightlike axial direction and a maximal catenoid with lightlike axis. Assume $c = 0$ and $d > 0$ (see Fig. \ref{fig:pathData3}). Then by \eqref{eqn:v1norm}, $\vec{v}_1$ is a lightlike axial direction while $\vec{v}_2$ is a spacelike axial direction. Therefore, align the vectors as $\vec{v}_1 \parallel \mathbf{e}_1 + \mathbf{e}_0$ and $\vec{v}_2 \parallel \mathbf{e}_2$.

\begin{figure}[H]
	\centering
	\scalebox{1}{\includegraphics{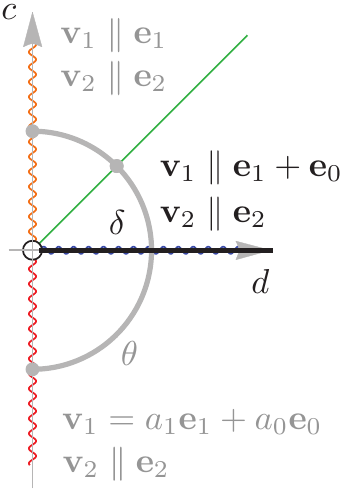}}
	\caption{Choice of parameter and axial directions for deformation to the catenoid with lightlike axis.}
	\label{fig:pathData3}
\end{figure}

Then by Lemma \ref{lemm:normal1b}, $N_1 - N_0 = -\frac{\rho_u}{\rho}$, and $N_2 = -\frac{1}{\delta}\,\frac{\rho_v}{\rho}$ where $\delta = \sqrt d$. Again using the unit normal vector, we calculate the following Weierstrass data:
	\[
		h_{\text{C\textsubscript{L}}}^\delta(z) = \frac{(\delta + 1)e^{\delta z} - 1}{(\delta - 1)e^{\delta z} + 1},
			\quad \eta_{\text{C\textsubscript{L}}}^\delta(z) = \frac{\left(1 + (\delta - 1)e^{\delta z}\right)^2}{4\delta^2 e^{\delta z}}.
	\]
Note that
	\begin{gather*}
		\left.h_{\text{C\textsubscript{L}}}^\delta (z)\right|_{\delta = 1} = \left.h^\theta(z)\right|_{\theta = 0},
			\quad \left.\eta_{\text{C\textsubscript{L}}}^\delta (z)\right|_{\delta = 1} = \left.\eta^\theta(z)\right|_{\theta = 0}, \\
		\lim_{\delta \searrow 0} h_{\text{C\textsubscript{L}}}^\delta (z) = \tilde{h}(z),
			\quad \lim_{\delta \searrow 0} \eta_{\text{C\textsubscript{L}}}^\delta (z) = \tilde{\eta} (z).
	\end{gather*}

In addition, by using the Weierstrass-type representation theorem,
	\[
		X_{\text{C\textsubscript{L}}}^\delta (u,v) =
			\begin{pmatrix}
				\frac{e^{-\delta u}\big\{((\delta^2 + 1)e^{2\delta u} -1) \cos (\delta v) - \delta(2u + \delta)e^{\delta u}\big\}}{2\delta^3}\\[5pt]
				\frac{e^{\delta u} \sin (\delta v) - \delta v}{\delta^2}\\[5pt]
				\frac{e^{-\delta u}\big\{-((\delta^2 - 1)e^{2\delta u} +1) \cos (\delta v) - \delta(2u - \delta)e^{\delta u}\big\}}{2\delta^3}
			\end{pmatrix}^t
	\]
for $\delta > 0$. Since
	\[
		\left.X_{\text{C\textsubscript{L}}}^\delta (u,v)\right|_{\delta = 1} = \left.X^\theta (u,v)\right|_{\theta = 0},
		\quad \lim_{\delta \searrow 0} X_{\text{C\textsubscript{L}}}^\delta (u,v) = \left.\tilde{X}^\lambda(u,v)\right|_{\lambda = 1},
	\]
$X_{\text{C\textsubscript{L}}}^\delta (u,v)$ is a continuous deformation from maximal Bonnet-type surface with lightlike axial direction (B\textsubscript{L}) to the maximal catenoid with lightlike axis (C\textsubscript{L}) (or maximal Enneper-type surface of second kind).

\subsection{Deformation to the plane}
Now, we show that there exists a deformation connecting maximal catenoid with spacelike axis to the plane. Up to this point, to solve the system of ordinary differential equations \eqref{eqn:max_ode1}--\eqref{eqn:max_ode4}, we assumed that $\rho_u(0,v) = f(0) = 1$, and $\rho_v(u,0) = g(0) = 0$. However, since $\rho(u,v) \equiv 1$ for the plane, we must consider different initial conditions for $f(u)$ and $g(v)$. Therefore, we use the result from Lemma \ref{lemm:zero} and consider the surfaces corresponding to case (1a), to assume that $f(0) = 0$ and $g(0) = 0$. Solving \eqref{eqn:max_ode1}--\eqref{eqn:max_ode4} similarly, we get
	\[
		f_\mathrm{P}(u) = \frac{\sqrt{c}}{\sqrt{c - d}} \sin{\left(\sqrt{c - d}\,u\right)}, \quad g_\mathrm{P}(v) = \frac{\sqrt{d}}{\sqrt{c - d}} \sinh{\left(\sqrt{c - d}\,v\right)}
	\]
where $c^2 + d^2 \neq 0$.

Since we assumed each of $f(u)$ and $g(v)$ has a zero, both axial directions are spacelike, and we may use Lemma \ref{lemm:normal} to calculate the unit normal vector. After letting  $\sqrt{c} = \cos\psi$ and $\sqrt{d} = \sin\psi$, we calculate the Weierstrass data as
\begin{align*}
	h_\text{P}^\psi(z) &= \frac{\sqrt{\cos 2\psi}}{\cos\psi - \sin\psi} \tan \left(\frac{\sqrt{\cos 2\psi} }{2}(z + S^\psi)\right)\\
	\eta_\text{P}^\psi(z) &= \frac{1}{\cos\psi + \sin\psi} \cos^2 \left(\frac{\sqrt{\cos 2\psi} }{2}(z+ S^\psi)\right)
\end{align*}
for $\psi \in \left(-\frac{\pi}{4},0 \right]$, where the factor for shifting parameter $S^\psi = 2\psi + \frac{\pi}{2}$ was chosen so that
	\[
		\left.h_\text{P}^\psi(z)\right|_{\psi = 0} = \left.h^\theta(z)\right|_{\theta = \frac{\pi}{2}},
			\quad \left.\eta_\text{P}^0(z)\right|_{\psi = 0} = \left.\eta^\theta(z)\right|_{\theta = \frac{\pi}{2}}.
	\]
Using the Weierstrass-type representation theorem, and multiplying by a homothety factor $\cos 2\psi$, we find that
	\[
		X_\text{P}^\psi (u,v) = \begin{cases}
			\begin{pmatrix}
				\frac{u \cos\psi \sqrt{\cos 2\psi} - \sin\psi \sin\left((u + S^\psi)\sqrt{\cos 2\psi}\right) \cosh \left(v \sqrt{\cos 2\psi}\right)}{\sqrt{\cos 2\psi}}\\[8pt]
				\frac{v \sin\psi \sqrt{\cos 2\psi} - \cos\psi \cos\left((u + S^\psi) \sqrt{\cos 2\psi}\right) \sinh\left(v \sqrt{\cos 2\psi}\right)}{\sqrt{\cos 2\psi}}\\[8pt]
				\cos\left((u + S^\psi) \sqrt{\cos 2\psi}\right) \cosh\left(v \sqrt{\cos 2\psi}\right)
			\end{pmatrix}^t, &\text{if $\psi \neq -\frac{\pi}{4}$}\\[30pt]
			\left(\sqrt{2} u, -\sqrt{2} v, 1\right), &\text{if $\psi = -\frac{\pi}{4}$}
		\end{cases}
	\]
for $\psi \in \left[-\frac{\pi}{4}, 0 \right]$, where $\left. X_\text{P}^{\psi}(u,v) \right|_{\psi = -\frac{\pi}{4}} = \lim_{\psi \searrow -\frac{\pi}{4}} X_\text{P}^\psi (u,v)$. Since
	\[
		\left.X_\text{P}^\psi (u,v)\right|_{\psi = 0} = \left.X^\theta (u,v)\right|_{\theta = \frac{\pi}{2}},
	\]
$X_\text{P}^\psi (u,v)$ defines a continuous deformation from the maximal catenoid with spacelike axis (C\textsubscript{S}) to the plane (P).
In summary, we get the following theorem.

\begin{theorem} \label{theo:oneDeformation}
There exists a continuous deformation consisting precisely of the maxfaces with planar curvature lines (see Fig. \ref{fig:deformationDiag} and Fig. \ref{fig:surfTheta}).
\end{theorem}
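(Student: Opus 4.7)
The plan is to assemble the deformation from the three parameter families already constructed in Sections \ref{sec:abresch} and \ref{sec:deformation}, whose union covers every maxface from Theorem \ref{theo:classification}. The backbone is the continuous family $X^\theta(u,v)$ for $\theta \in [-\frac{\pi}{2}, \frac{\pi}{2}]$ furnished by Theorem \ref{theo:classification}, which already sweeps through the three maximal Bonnet-type surfaces B\textsubscript{T}, B\textsubscript{L}, B\textsubscript{S}, the Enneper-type surface E, and the catenoids C\textsubscript{T}, C\textsubscript{S}. This family handles every surface in the classification except the plane and the catenoid with lightlike axis together with its associated family.

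To close the remaining gap at C\textsubscript{L}, I would attach at $\theta = 0$ the family $X_{\text{C\textsubscript{L}}}^\delta(u,v)$ constructed above, which satisfies $\left. X_{\text{C\textsubscript{L}}}^\delta\right|_{\delta=1} = \left.X^\theta\right|_{\theta=0}$ and $\lim_{\delta \searrow 0} X_{\text{C\textsubscript{L}}}^\delta = \left.\tilde{X}^\lambda\right|_{\lambda=1}$, landing precisely on one distinguished member of C\textsubscript{L}'s associated family; then appending the $\mathbb{S}^1$-family $\tilde{X}^\lambda(u,v)$ sweeps through all remaining members of that associated family. Symmetrically, to reach the plane I would attach at $\theta = \frac{\pi}{2}$ the family $X_\text{P}^\psi(u,v)$ for $\psi \in [-\frac{\pi}{4}, 0]$, which meets $\left.X^\theta\right|_{\theta = \pi/2}$ at $\psi = 0$ and limits to the plane at $\psi = -\frac{\pi}{4}$.

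With all four pieces in place, continuity at the gluing parameters is exactly the content of the explicit limit identities for the Weierstrass data and the parametrizations already recorded in the preceding two subsections, while continuity within each piece follows from the real-analytic dependence of the parametrizations on $\theta$, $\delta$, $\lambda$, and $\psi$. That the combined deformation consists of \emph{precisely} the maxfaces with planar curvature lines is then immediate from Theorem \ref{theo:classification}, which asserts that every such surface is, up to isometry and homothety of $\mathbb{R}^{2,1}$, a piece of one of the surfaces appearing in this combined family.

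The main obstacle is not any single computation but the conceptual bookkeeping: one must recognize that the $\theta$-parametrization, built from the initial conditions $f(0) = 1$ and $g(0) = 0$, intrinsically excludes both the plane (which forces the alternative initial condition $f(0) = g(0) = 0$) and the $\lambda$-parameter of the C\textsubscript{L} associated family, and therefore that the two auxiliary bridging families $X_{\text{C\textsubscript{L}}}^\delta$ and $X_\text{P}^\psi$ are essential intermediaries. Once their existence and their limiting values at both endpoints have been verified, concatenation into a single continuous deformation is formal.
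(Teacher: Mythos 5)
Your proposal matches the paper's argument: the paper likewise takes the $\theta$-family from Theorem \ref{theo:classification} as the backbone, constructs the two bridging families $X_{\text{C\textsubscript{L}}}^\delta$ (attached at $\theta=0$, limiting to $\tilde{X}^\lambda|_{\lambda=1}$ and then swept by $\lambda\in\mathbb{S}^1$) and $X_\text{P}^\psi$ (attached at $\theta=\frac{\pi}{2}$ using the alternative initial condition $f(0)=g(0)=0$, limiting to the plane), and concludes by the recorded limit identities together with the classification. Your observation that the plane and the C\textsubscript{L} associated family are the precise gaps requiring these intermediaries is exactly the paper's organizing point, so the two arguments are essentially identical.
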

\begin{figure}
\begin{center}
	\scalebox{1}{
        \begin{tikzpicture}[scale=1.8]
        \node (A) at (-0.2,1) {P};
        \node (B) at (1,1) {C\textsubscript{S}};
        \node (C) at (1.75,1) {B\textsubscript{S}};
        \node (D) at (2.5,1) {E};
        \node (E) at (3.25,1) {B\textsubscript{S}};
        \node (F) at (4,1) {B\textsubscript{L}};
        \node (G) at (4.75,1) {B\textsubscript{T}};
        \node (H) at (5.5,1) {C\textsubscript{T}};
        \node (I) at (4,0) {C\textsubscript{L}};
        \node (J) at (1,1.2) {$\phantom{\text{C\textsubscript{S}}}$};
        \node (K) at (5.5,1.2) {$\phantom{\text{C\textsubscript{T}}}$};
        \path[<->]
        (A) edge node[above]{$X_\text{P}^\psi$}(B)
        (B) edge (C)
        (C) edge (D)
        (D) edge (E)
        (E) edge (F)
        (F) edge (G)
        (G) edge (H)
        (J) edge node[above]{$X^\theta$}(K)
        (F) edge node[right]{$X_{\text{C\textsubscript{L}}}^\delta$} (I);
        \draw [<->] (3.73,-0.1) arc (-230:50:0.4);
        \node (L) at (4,-0.6) {$\tilde{X}^\lambda$};
        \end{tikzpicture}}
\end{center}
\caption{Diagram of deformations connecting maxfaces with planar curvature lines.}
\label{fig:deformationDiag}
\end{figure}
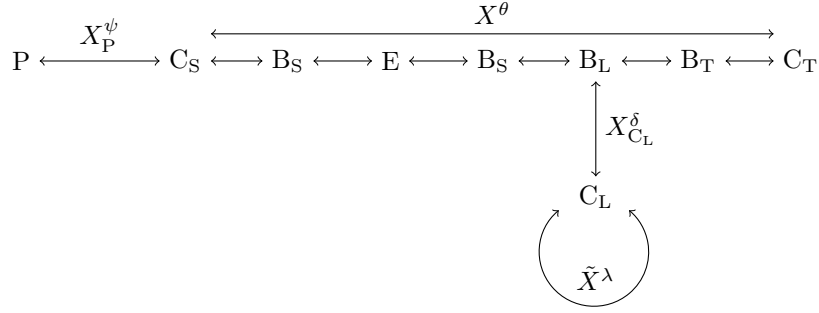

\begin{figure}
	\centering
	\scalebox{0.88}{\includegraphics{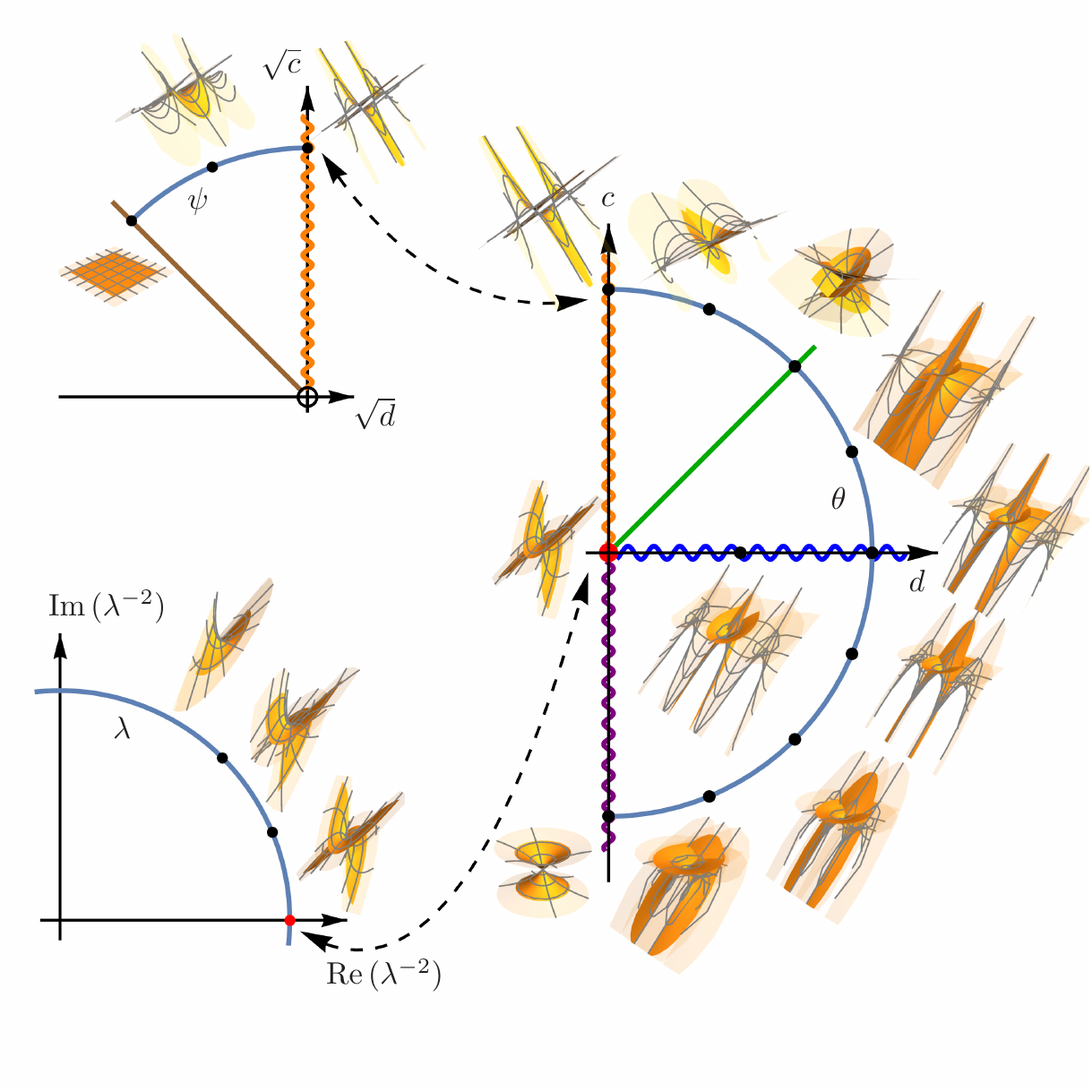}}
	\caption{Maxfaces with planar curvature lines and their deformations.}
	\label{fig:surfTheta}
\end{figure}


\section{Singularities of maximal Bonnet-type surfaces} \label{sec:singularities}

As mentioned in Remark \ref{rema:maxface}, \emph{maxfaces} was introduced as a class of maximal surfaces with singularities in \cite{umehara_maximal_2006}. In this section, we investigate the types of singularities appearing on maxfaces with planar curvature lines. Since the types of singularities of maximal catenoids and maximal Enneper-type surfaces have been investigated \cite{kobayashi_maximal_1984, umehara_maximal_2006, fujimori_singularities_2008}, we focus on recognizing the types of singularities on maximal Bonnet-type surfaces.

Let $S(X):=\{(u,v)\in\mathbb{R}^2 : \rho(u,v)=0\}$ be the singular set. Then using the explicit solution of the metric function in Proposition \ref{prop:max_solutionFG}, we understand that the singular set becomes $1$-dimensional. To recognize the types of singularities of maximal Bonnet-type surfaces, we refer to the following results from \cite{umehara_maximal_2006}, \cite{fujimori_singularities_2008}, and \cite{ogata_duality_}.

\begin{fact}\label{prop:criteria}
Let $X(u,v) : \Sigma \to \mathbb{R}^{2,1}$ be a maxface with Weierstrass data $(h, \eta\,\diff z)$. Then, a point $p \in \Sigma$ is a singular point if and only if $| h(p) | = 1$. Furthermore, for
\[\varphi := \frac{h_z}{h^2 \eta}, \quad \phi := \frac{h}{h_z}\varphi_z, \quad \Phi := \frac{h}{h_z}\phi_z,\]
the image of $X$ around a singular point $p$ is locally diffeomorphic to
\begin{itemize}
	\item[$\bullet$] a cuspidal edge if and only if $\re \varphi \neq 0$ and $\im \varphi \neq 0$ at $p$,
	\item[$\bullet$] a swallowtail if and only if $\varphi \in \mathbb{R} \setminus \{0\}$ and $\re \phi \neq 0$ at $p$,
	\item[$\bullet$] a cuspidal cross cap if and only if $\varphi \in i\,\mathbb{R} \setminus \{0\}$ and $\im \phi \neq 0$ at $p$, or
	\item[$\bullet$] a cuspidal $S_1^-$ singularity if and only if $\varphi \in i\,\mathbb{R} \setminus \{0\}$, $\phi \in \mathbb{R} \setminus \{0\}$, and $\im \Phi \neq 0$ at $p$.
\end{itemize}
\end{fact}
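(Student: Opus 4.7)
The plan is to split the statement into two claims: first the characterization of the singular locus, then the classification of the singularity types.

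For the singular locus, I would start from the Weierstrass-type representation and compute $X_z = \tfrac{1}{2}(1+h^2, i(1-h^2), -2h)\eta$, from which one verifies that $\langle X_z, X_{\bar z}\rangle = \tfrac{1}{2}(1-|h|^2)^2|\eta|^2$. Hence the conformal factor of the induced metric is $(1-|h|^2)^2|\eta|^2$. Since the maxface hypothesis on $(h,\eta)$ prevents $\eta$ from contributing extra zeros to $\diff s^2$ (poles of $h$ being absorbed by the standard $1/h$ change of chart), the degeneracy of the metric occurs exactly on $\{|h|=1\}$, giving the first assertion.

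For the classification, I would invoke the criteria of Kokubu--Rossman--Saji--Umehara--Yamada for cuspidal edges and swallowtails, together with their extensions by Fujimori et al.\ and by Ogata for cuspidal cross caps and cuspidal $S_1^-$ singularities. The scheme has three steps: (i) verify that $X$ is a frontal by exhibiting a smooth unit normal $N$ whose Legendrian lift is an immersion; (ii) find a null vector field $\eta_s$ spanning $\ker dX$ along the singular curve; (iii) translate the standard criteria, which are phrased in terms of tangency of $\eta_s$ to the singular curve and the vanishing order of the signed area density $\lambda := \det(X_u,X_v,N)$, into algebraic conditions on $h$, $\eta$, and their $z$-derivatives.

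A direct computation then shows that, after evaluation at a singular point $p$ with $|h(p)|=1$, $\lambda$ and its successive derivatives along $\eta_s$ factor into expressions in which $\varphi = h_z/(h^2\eta)$ is the leading invariant, while $\phi$ and $\Phi$ arise as the next two derivatives. One reads off: $\re\varphi\ne 0$ and $\im\varphi\ne 0$ means the null direction is transverse to the singular curve, giving a cuspidal edge; $\varphi\in\mathbb{R}\setminus\{0\}$ encodes tangency of $\eta_s$ to the singular curve, and the second-order non-degeneracy $\re\phi\ne 0$ yields a swallowtail; the purely imaginary subcase $\varphi\in i\,\mathbb{R}\setminus\{0\}$ corresponds to the non-front strata, producing a cuspidal cross cap when $\im\phi\ne 0$, and a cuspidal $S_1^-$ singularity in the further-degenerate subcase where $\phi$ is also real with $\im\Phi\ne 0$.

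The main obstacle is the last bullet: the cuspidal $S_1^-$ criterion sits deepest in the discriminant hierarchy, and showing that the single third-order condition $\im\Phi\ne 0$ suffices to distinguish it from the other degenerate non-front types requires a careful $3$-jet normal-form analysis, essentially the input provided by the Ogata reference. Without that input, $\Phi$ is not obviously the right distinguishing invariant, and one would otherwise have to reprove the $\mathcal{A}$-classification of rank-one degenerate singularities of Lorentzian frontals from scratch.
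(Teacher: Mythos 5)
The paper gives no proof of this statement: it is imported verbatim as a \textbf{Fact} from the cited works of Umehara--Yamada, Fujimori et al., and Ogata, so there is no internal argument to compare yours against. That said, your outline is a faithful reconstruction of how those references establish it. The computation of the singular locus is correct: with $X_z=\tfrac12(1+h^2,i(1-h^2),-2h)\eta$ one gets $\langle X_z,X_{\bar z}\rangle=\tfrac12(1-|h|^2)^2|\eta|^2$, and the maxface admissibility condition on $(h,\eta\,\diff z)$ rules out extra degeneracy from $\eta$, so $S(X)=\{|h|=1\}$. Your three-step scheme for the classification (frontal structure, null direction, derivatives of the signed area density along it) is exactly the mechanism behind the criteria of Kokubu--Rossman--Saji--Umehara--Yamada and their maxface specializations. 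One small imprecision: the two conditions $\re\varphi\neq0$ and $\im\varphi\neq0$ play distinct roles rather than jointly encoding transversality --- $\im\varphi\neq 0$ is transversality of the null direction to the singular curve, while $\re\varphi\neq0$ is the condition that $X$ be a front at $p$; the cuspidal cross cap and cuspidal $S_1^-$ strata live precisely where the front condition fails but transversality holds. Finally, you are right that the $S_1^-$ criterion via $\im\Phi\neq0$ is not something one can read off from the first two derivatives of the area density; it genuinely requires the higher-order normal-form analysis of the Ogata reference, so your proposal, like the paper, ultimately rests on citing that input rather than reproving it.
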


To make the calculations simpler, from Lemma \ref{lemm:normal1b}, assume that $c = t^2 - 1$ and $d = t^2$ for $t > 0$. If we further assume that $a_1 = t$ and $a_0 = 1$,
then we obtain the following Weierstrass data:
\begin{equation}\label{eqn:sing_Wdata}
	h^t(z) = e^z - t, \quad \eta^t(z) = \frac{e^{-z}}{2}
\end{equation}
after a shift of paramter $u \mapsto u + \log(t+1)$. Note that this Weierstrass data represents exactly the Bonnet family described in \cite{leite_surfaces_2015}, and that all maximal Bonnet-type surfaces are included in this family by Theorem \ref{theo:bifurcation}. Then, for the family,
\begin{itemize}
	\item[$\bullet$] If $t > 1$, the surface is a maximal Bonnet-type surface with spacelike axial direction.
	\item[$\bullet$] If $t = 1$, the surface is a maximal Bonnet-type surface with lightlike axial direction.
	\item[$\bullet$] If $t < 1$, the surface is a maximal Bonnet-type surface with timelike axial direction.
\end{itemize}

By directly calculating $\varphi$, $\phi$, and $\Phi$, and using Fact \ref{prop:criteria}, we arrive at the following result.
\begin{prop}
Let $X^t(u,v)$ be a maximal Bonnet-type surface with the Weierstrass data given in \eqref{eqn:sing_Wdata}. The image of $X$ around a singular point $p = (u,v)$ is locally diffeomorphic to the following:
\begin{itemize}
\item[$\bullet$] a swallowtail (SW) only at
\[\renewcommand\arraystretch{1.1}
\begin{array}{ c  c  l }
	0 < t < 1 & \textup{(B\textsubscript{T}):} & \left(\log(\circlepm\, t + 1), \cos^{-1} (\circlepm\, 1)\right)\\
	t = 1 & \textup{(B\textsubscript{L}):} & (\log 2, \cos^{-1} 1)\\
	t > 1 &\textup{(B\textsubscript{S}):} &  (\log (t \pm 1), \cos^{-1} 1), \left(\log \sqrt{t^2 - 1}, \cos^{-1}\sqrt{1-t^{-2}}\right)
\end{array}\]
\item[$\bullet$] a cuspidal cross cap (CCR) only at
\[\renewcommand\arraystretch{1.5}
\begin{array}{ c  c  l }
	0 < t \leq \frac{1}{\sqrt{2}} & \textup{(B\textsubscript{T}):} & \text{None}\\
	\frac{1}{\sqrt{2}} < t < 1 & \textup{(B\textsubscript{T}):} & \left(\log\left(\circlepm\sqrt{t^2 - \frac{1}{2}} + \sqrt{\frac{1}{2}}\right), \cos^{-1}\left(\circlepm\frac{1}{t}\sqrt{t^2 - \frac{1}{2}}\right)\right)\\
	t = 1 & \textup{(B\textsubscript{L}):} & \left(\log \sqrt{2}, \cos^{-1} \frac{1}{\sqrt{2}}\right)\\
	t > 1 & \textup{(B\textsubscript{S}):} & \left(\log\left(\sqrt{t^2 - \frac{1}{2}} \pm \sqrt{\frac{1}{2}}\right),  \cos^{-1}\left(\frac{1}{t}\sqrt{t^2 - \frac{1}{2}}\right)\right)
\end{array}\]
\item[$\bullet$] or a cuspidal $S_1^-$ singularity (CS) only at
\[\renewcommand\arraystretch{1}
\begin{array}{ c  c l }
	t = 1/\sqrt{2} & \textup{(B\textsubscript{T}):} & \left(-\log(\sqrt{2}), \cos^{-1} 0\right)
\end{array}\]
\end{itemize}
where $\circlepm$ corresponds to each other.
\end{prop}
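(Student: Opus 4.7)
The plan is to apply Fact \ref{prop:criteria} directly to the simplified Weierstrass data $(h^t, \eta^t\diff z) = (e^z - t, \tfrac{1}{2}e^{-z}\diff z)$. Setting $w := e^z$, the singular set is cut out by $|w - t| = 1$, which I would parametrize by an angle $\alpha \in [0, 2\pi)$ via $w - t = e^{i\alpha}$, yielding $e^u = |t + e^{i\alpha}|$ and $v \equiv \arg(t + e^{i\alpha}) \pmod{2\pi}$ along the singular curve.

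A direct computation with $h^t = e^z - t$ and $\eta^t = \tfrac{1}{2}e^{-z}$ gives
\[
\varphi = \frac{h^t_z}{(h^t)^2 \eta^t} = \frac{2w^2}{(w-t)^2},
\]
and then $\phi = (h^t/h^t_z)\varphi_z$ and $\Phi = (h^t/h^t_z)\phi_z$ as explicit rational functions of $w$ and $t$. On the singular locus $w - t = e^{i\alpha}$, the identity $\varphi = 2 w^2 e^{-2i\alpha}$ shows that the reality condition $\varphi \in \mathbb{R}$ \resp{$\varphi \in i\mathbb{R}$} is equivalent to $v \equiv \alpha \pmod{\pi/2}$ \resp{$v \equiv \alpha + \pi/4 \pmod{\pi/2}$}. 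Substituting $v = \arg(t + e^{i\alpha})$ produces real trigonometric equations in $\alpha$ depending on $t$, which I would solve explicitly.

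The case analysis in $t$ is the heart of the argument. The condition $\varphi \in \mathbb{R}\setminus\{0\}$ always produces swallowtail candidates on the real axis at $u = \log|t \pm 1|$, $v \in \{0, \pi\}$; the negative branch degenerates as $t \to 1$ and reappears as $\log(t-1)$ for $t > 1$. Moreover, for $t > 1$ an additional pair of off-axis solutions emerges at $(\log\sqrt{t^2-1}, \cos^{-1}\sqrt{1-t^{-2}})$, because the trigonometric equation acquires extra roots in this regime. Meanwhile the condition $\varphi \in i\mathbb{R}\setminus\{0\}$ is solvable only when $t \geq 1/\sqrt{2}$, with the two conjugate solutions colliding at $t = 1/\sqrt{2}$---exactly where the cuspidal cross caps degenerate into the unique cuspidal $S_1^-$ singularity.

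The main obstacle will be discharging the secondary non-degeneracy conditions: $\re\phi \neq 0$ at each swallowtail candidate, $\im\phi \neq 0$ at each cuspidal cross cap candidate, and simultaneously $\phi \in \mathbb{R}\setminus\{0\}$ and $\im\Phi \neq 0$ at the coalescence point $t = 1/\sqrt{2}$. Each of these reduces, after inserting the explicit $\alpha$, to showing that an elementary algebraic expression in $t$ is nonzero in the relevant range, which is mechanical but tedious. Assembling the cases yields the tables in the statement, with the symbol $\circlepm$ recording the two conjugate $\alpha$-solutions of each trigonometric equation.
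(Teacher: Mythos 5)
Your proposal is correct and follows essentially the same route as the paper, whose proof consists precisely of computing $\varphi$, $\phi$, $\Phi$ from the data \eqref{eqn:sing_Wdata} and applying Fact \ref{prop:criteria}; your formula $\varphi = 2w^2/(w-t)^2$ with $w=e^z$, the parametrization $w-t=e^{i\alpha}$ of the singular locus, and the resulting reality conditions are all consistent with the stated loci (including the coalescence of the two cuspidal cross caps into the cuspidal $S_1^-$ point at $t=1/\sqrt{2}$). The remaining non-degeneracy checks you defer are exactly the ``direct calculation'' the paper also leaves to the reader.
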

Hence, from the singularity theory point of view, we understand that maximal Bonnet-type surfaces with timelike axial directions can further be classified into the following three types: type 1 (B\textsubscript{T1}), type 2 (B\textsubscript{T2}), or type 3 (B\textsubscript{T3}).

Since maximal Bonnet-type surfaces are periodic in the $v$-direction, let a single portion of $X(u,v)$ refer to the part of the surface mapped over a single period of $v$ in the domain. Then, in summary, we understand the following theorem concerning the types of singularities on maximal Bonnet-type surfaces.

\begin{theorem}\label{theo:singularityType}
Let $X^t(u,v)$ be a maximal Bonnet-type surface with the Weierstrass data given in \eqref{eqn:sing_Wdata}. The images of a single portion of $X$ around singular points are locally diffeomorphic to cuspidal edges except at the following number of points.
\[\renewcommand\arraystretch{1}
\begin{array}{@{} ccccc @{}}
	\toprule
		& \text{type of surface} & \text{\# of SW} & \text{\# of CCR} & \text{\# of CS}\\
	\midrule
	0 < t < 1/\sqrt{2} & \textup{B\textsubscript{T1}} & 2 & 0 & 0\\
	t = 1/\sqrt{2} & \textup{B\textsubscript{T2}} & 2 & 0 & 2\\
	1/\sqrt{2} < t < 1 & \textup{B\textsubscript{T3}} & 2 & 4 & 0\\
	t = 1 & \textup{B\textsubscript{L}} & 1 & 2 & 0\\
	1 < t & \textup{B\textsubscript{S}} & 4 & 4 & 0\\
	\bottomrule
\end{array}\]
\end{theorem}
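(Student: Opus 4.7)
The plan is to apply the preceding Proposition directly to count singularities per $v$-period, and to invoke Fact \ref{prop:criteria} to show that the remaining points of the singular locus $S(X^t) = \{(u,v) : |h^t(u+iv)| = 1\}$ are cuspidal edges.

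First, I would note that $h^t$ and $\eta^t$ depend on $z$ only through $e^z$ and are thus $2\pi i$-periodic in $z$, so a single portion of $X^t$ corresponds to $v \in [0, 2\pi)$. For each case in the Proposition I would count solutions of the $\cos^{-1}$ expression in this interval, using the elementary fact that $\cos^{-1}(\pm 1)$ is single-valued (giving $0$ or $\pi$) while $\cos^{-1}(\alpha)$ with $|\alpha| < 1$ has exactly two preimages, $\arccos\alpha$ and $2\pi - \arccos\alpha$. Combined with the $\circlepm$-bookkeeping that correlates the choices in the $u$- and $v$-coordinates, this yields the tabulated numbers directly; for instance, the four swallowtails in the $t > 1$ row arise from the two points $(\log(t\pm 1), 0)$ together with the two preimages of $\cos^{-1}(\sqrt{1 - t^{-2}})$ above $u = \log\sqrt{t^2 - 1}$.

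Second, to identify the remaining points of $S(X^t)$ as cuspidal edges, I would compute from \eqref{eqn:sing_Wdata} that
\[
\varphi = \frac{h^t_z}{(h^t)^2 \eta^t} = \frac{2 e^{2z}}{(e^z - t)^2}.
\]
Parametrizing $S(X^t)$ by setting $e^z - t = e^{i\alpha}$ gives $\varphi = 2(1 + t e^{-i\alpha})^2$, whereupon the conditions $\varphi \in \mathbb{R}$ and $\varphi \in i\mathbb{R}$ reduce to elementary trigonometric equations in $\alpha$ whose solution sets coincide with the special points already enumerated. Off of this finite set both $\re\varphi$ and $\im\varphi$ are nonzero, so the first bullet of Fact \ref{prop:criteria} applies and the point is a cuspidal edge.

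The hard part will be the bookkeeping in the boundary cases $t = 1/\sqrt{2}$ and $t = 1$. At $t = 1/\sqrt{2}$ the formal CCR candidates degenerate and additionally satisfy the stronger CS criterion $\phi \in \mathbb{R} \setminus \{0\}$, so they migrate from the CCR column into the CS column, producing the counts $0$ CCR and $2$ CS in that row. At $t = 1$ the outer pair of swallowtail candidates coalesces into the single point $(\log 2, 0)$, yielding the reduced counts $1$ SW and $2$ CCR. Verifying that these degeneracies occur exactly as claimed is the part of the argument that most warrants care.
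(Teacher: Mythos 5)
Your proposal is correct and follows essentially the same route as the paper: the theorem is obtained by counting, over one $v$-period, the distinguished singular points listed in the preceding Proposition (using that $\cos^{-1}(\pm1)$ contributes one $v$-value per period while $\cos^{-1}(\alpha)$ with $|\alpha|<1$ contributes two), and by checking via the first bullet of Fact \ref{prop:criteria} that all remaining points of $\{|h^t|=1\}$ are cuspidal edges. The paper leaves these computations implicit (``in summary''), whereas you spell out the period bookkeeping and the convenient parametrization $e^z-t=e^{i\alpha}$ of the singular set, which correctly reproduces the locations and degeneracies at $t=1/\sqrt{2}$ and $t=1$.
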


Combined with the result in \cite{kobayashi_maximal_1984}, \cite{umehara_maximal_2006}, and \cite{fujimori_singularities_2008}, we obtain the following corollary.
\begin{coro}
Let $X(u,v)$ be a maxface with planar curvature lines. If $p$ is a singular point of $X(u,v)$, then the image of $X$ around the singular point $p$ must be locally diffeomorphic to one of the following: conelike singularity, fold singularity, cuspidal edge, swallowtail, cuspidal cross cap, or cuspidal $S_1^-$ singularity.
\end{coro}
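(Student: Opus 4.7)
The plan is to argue by cases using the complete classification of maxfaces with planar curvature lines established in Theorem \ref{theo:classification}. Since every such maxface is, up to isometries and homotheties of $\mathbb{R}^{2,1}$, a piece of one of the surfaces in the list (the plane, maximal Enneper-type surface E, maximal catenoids C\textsubscript{L}, C\textsubscript{S}, C\textsubscript{T}, or maximal Bonnet-type surfaces B\textsubscript{L}, B\textsubscript{S}, B\textsubscript{T}), and since local diffeomorphism type of singularities is preserved by isometries and homotheties, it suffices to verify the conclusion for representatives of each class.

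First, I would dispose of the plane trivially, since it has no singular points. Then, for the three maximal catenoids C\textsubscript{L}, C\textsubscript{S}, C\textsubscript{T} and the maximal Enneper-type surface E, I would invoke the known analyses of their singularities: by \cite{kobayashi_maximal_1984}, \cite{umehara_maximal_2006}, and \cite{fujimori_singularities_2008}, the singular points appearing on these specific surfaces are exhausted by conelike singularities, fold singularities, cuspidal edges, and swallowtails (the catenoids contributing the conelike and fold points, and the Enneper surface contributing cuspidal edges and swallowtails). Each of these types already appears in the conclusion's list.

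Next, for the maximal Bonnet-type surfaces B\textsubscript{L}, B\textsubscript{S}, B\textsubscript{T}, I would directly apply Theorem \ref{theo:singularityType}. That theorem asserts that within a single period in the $v$-direction all singular points are cuspidal edges except for finitely many points, at which the singularities are swallowtails, cuspidal cross caps, or cuspidal $S_1^-$ singularities; by the $v$-periodicity of the Bonnet family this description extends to all singular points. Every type named here is again on the list in the statement of the corollary.

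Finally, I would assemble the conclusion: taking the union of the singularity types contributed by each class in the classification, one obtains precisely the list of six types in the corollary, and this completes the proof. I do not expect a serious obstacle, since all the hard work has already been done in Theorem \ref{theo:classification} (which reduces the problem to a finite list of model surfaces) and Theorem \ref{theo:singularityType} together with the cited references (which identify the possible singularity types on each model). The only modest subtlety is verifying that appealing to isometries and homotheties of $\mathbb{R}^{2,1}$ does not affect the local diffeomorphism class of a singularity, which is immediate since such transformations are global diffeomorphisms of the ambient space.
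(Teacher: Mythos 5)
Your proposal is correct and follows essentially the same route as the paper: the paper derives this corollary by combining Theorem \ref{theo:singularityType} (for the Bonnet-type surfaces) with the known singularity analyses of the maximal catenoids and Enneper-type surfaces in \cite{kobayashi_maximal_1984}, \cite{umehara_maximal_2006}, and \cite{fujimori_singularities_2008}, exactly as you do. The only cosmetic difference is that the paper leaves the case-by-case assembly implicit, whereas you spell it out (including the invariance under isometries and homotheties), which is harmless.
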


\begin{figure}
    \centering
    \scalebox{0.9}{\includegraphics{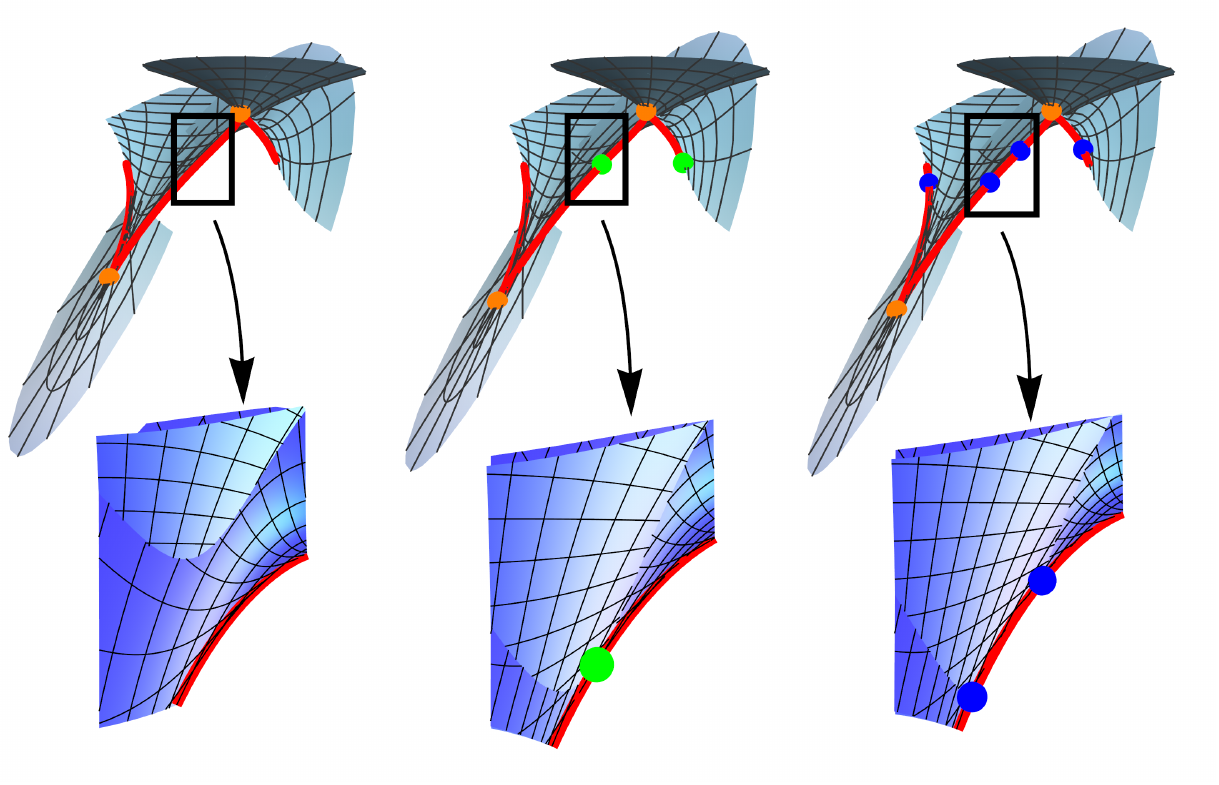}}
    \caption{Types of singularities for maximal Bonnet-type surfaces with timelike axial directions (\text{B\textsubscript{T1}}, \text{B\textsubscript{T2}}, \text{B\textsubscript{T3}}) where the cuspidal edges are highlighted by a red line, swallowtails by orange points, cuspidal cross caps by blue points, and cuspidal $S_1^-$ singularities by green points.}
    \label{fig:singDiag1}
\end{figure}

\begin{figure}
    \centering
    \scalebox{0.9}{\includegraphics{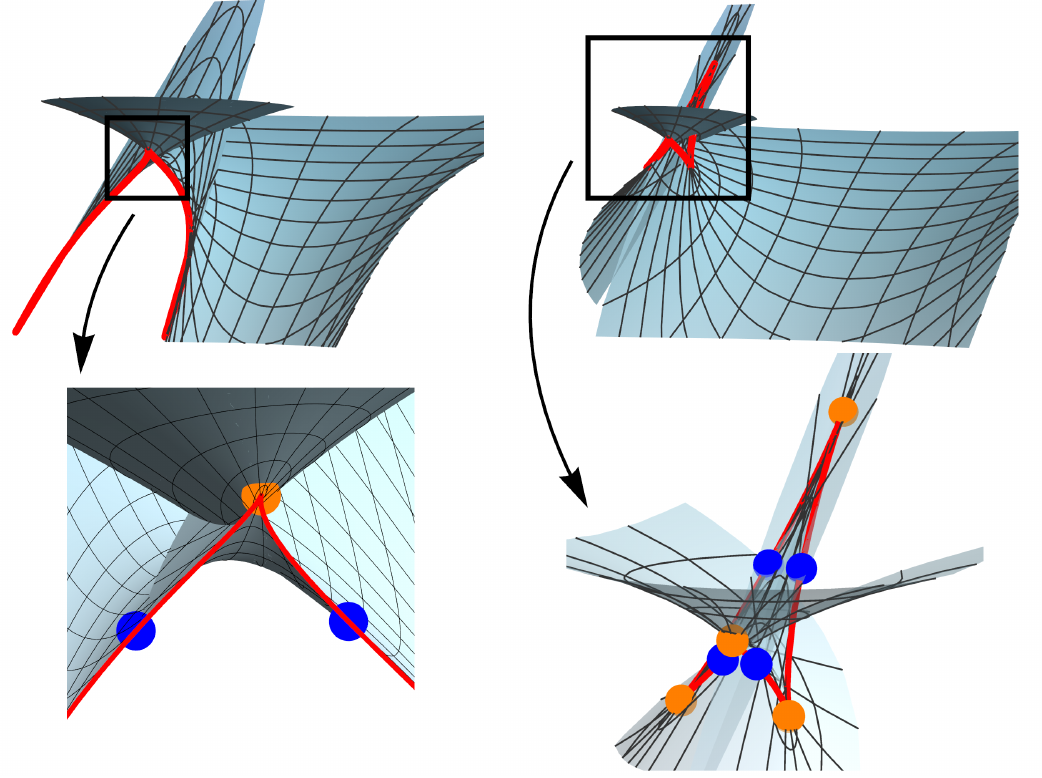}}
    \caption{Types of singularities for maximal Bonnet-type surfaces with lightlike and space axial directions (\text{B\textsubscript{L}}, \text{B\textsubscript{S}}) where the cuspidal edges are highlighted by a red line, swallowtails by orange points, and cuspidal cross caps by blue points.}
    \label{fig:singDiag2}
\end{figure}

\section{Maximal surfaces that are also affine minimal surfaces}\label{sec:thomsen}
In the Euclidean case, Thomsen studied minimal surfaces in $\mathbb{R}^3$ that are also affine minimal surfaces, those surfaces with zero affine mean curvature surfaces and with indefinite affine metric with respect to the equiaffine structure, called Thomsen surfaces, in \cite{thomsen_uber_1923}, and commented on the fact that such surfaces are conjugates of minimal surfaces with planar curvature lines. The analogous statement holds true for maximal surfaces in $\mathbb{R}^{2,1}$ as shown through the following result in \cite{manhart_bonnet-thomsen_2015}.

\begin{fact}
An umbilic-free maximal surface in $\mathbb{R}^{2,1}$ has planar curvature lines if and only if the conjugate surface is an affine minimal surface.
\end{fact}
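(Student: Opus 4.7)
The plan is to reduce both sides of the equivalence to conditions on the conformal metric $\rho$ (or equivalently on the Weierstrass data) in a common coordinate system on the original surface $X$, and then match them. By Lemma following \eqref{eqn:gaussW}, the ``planar curvature lines'' hypothesis on the umbilic-free maximal surface $X$ is equivalent to $\rho_{uv}=0$ in conformal curvature line coordinates $(u,v)$ normalized so that $Q=-\tfrac12$. This is a clean PDE condition to work against.

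First I would describe the conjugate surface $X^\ast$ concretely: if $X$ has Weierstrass data $(h,\eta\,\diff z)$, then $X^\ast=\im\int(1+h^2,\,i(1-h^2),\,-2h)\,\eta\,\diff z$ corresponds to replacing $\eta$ by $i\eta$, equivalently $\lambda^{-2}=i$ in the associated family. The induced metric and the Gauss map $h$ are preserved, while $Q\mapsto -Q$; in particular the coordinates $(u,v)$ on $X$ transfer to \emph{asymptotic} coordinates on $X^\ast$, not curvature line coordinates. This reflects exactly the Thomsen dichotomy (curvature lines of $X$ correspond to asymptotic lines of $X^\ast$), so the affine theory of $X^\ast$, whose affine metric is indefinite, should be naturally expressed in precisely those $(u,v)$.

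Next I would set up equiaffine geometry for a spacelike immersion $Y:\Sigma\to\mathbb{R}^{2,1}$ with indefinite second fundamental form, following the Blaschke--Manhart formalism: normalize the transversal field so that $\det(Y_u,Y_v,\xi)=1$, extract the equiaffine Berwald--Blaschke metric from the normalized second fundamental form, and compute the affine mean curvature $L_1$. Applied to $Y=X^\ast$ in the coordinates inherited from $X$, the zero mean curvature condition of $X$ (hence of $X^\ast$) and the maximal Gauss--Weingarten equations \eqref{eqn:gaussW} will let me express $L_1$ as a differential polynomial in $\rho$ and its partial derivatives, divided by a power of $\rho$. The expectation, based on the analogous Euclidean Thomsen computation, is that $L_1$ is, up to an everywhere-nonzero factor, proportional to $\rho_{uv}$ (or to $\partial_u\partial_v\log\rho$, which is equivalent modulo the Gauss equation \eqref{eqn:max_pde1}). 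Given that, the equivalence follows immediately: $L_1\equiv 0$ on $X^\ast$ iff $\rho_{uv}\equiv 0$ iff the curvature lines of $X$ are planar.

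The main obstacle will be the bookkeeping in the affine step. The equiaffine normal for spacelike surfaces in $\mathbb{R}^{2,1}$ involves a fourth-root of $|\det h_{II}|$, and distinguishing the sign conventions appropriate to the Lorentzian ambient (so that the affine metric is genuinely indefinite, as the hypothesis demands) is delicate; one must also check that the singular locus of the affine structure (where $\det h_{II}$ vanishes) coincides with the umbilic locus that we have already excluded. Once the formula $L_1=(\text{nonzero})\cdot\rho_{uv}$ is rigorously established in a neighborhood of a non-umbilic point, globalization is immediate because both conditions are analytic. I would therefore concentrate the effort on that single computation and cite Manhart for the parallel between the Euclidean and Lorentzian normalizations.
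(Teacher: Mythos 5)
The statement you are addressing is quoted in the paper as a \emph{Fact} imported from Manhart \cite{manhart_bonnet-thomsen_2015}; the paper supplies no proof of it, so there is no internal argument to compare yours against, and your proposal has to stand as a self-contained proof of Manhart's theorem. Your route is the natural computational one, and --- importantly --- the identity you defer as an ``expectation'' is in fact correct with no extra terms. Concretely: in the conformal curvature line coordinates of $X$ normalized so that $Q=-\tfrac12$, the conjugate $X^{\ast}$ keeps the first fundamental form $\rho^{2}(\diff u^{2}+\diff v^{2})$ while its Hopf differential becomes $\mp\tfrac{i}{2}\,\diff z^{2}$, so its second fundamental form is the \emph{constant} form $\pm 2\,\diff u\,\diff v$. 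Hence the equiaffine quantities $h_{ij}=\det(X^{\ast}_{u},X^{\ast}_{v},X^{\ast}_{ij})$ satisfy $h_{11}=h_{22}=0$ and $h_{12}=\pm\rho^{2}$, the Berwald--Blaschke metric is $\pm 2\rho\,\diff u\,\diff v$ (indefinite, and nondegenerate precisely because the umbilic-free hypothesis permits the normalization $Q=-\tfrac12$ while the immersion keeps $\rho>0$, which settles the regularity issue you flag), and the affine normal is, up to the standard normalization, $\xi=\rho^{-1}X^{\ast}_{uv}$. Feeding the Gauss--Weingarten equations of $X^{\ast}$ and the Gauss equation \eqref{eqn:max_pde1} into $\diff\xi$ makes the normal components cancel (apolarity) and yields that both diagonal entries of the affine shape operator equal $-\rho_{uv}/\rho^{2}$, so the affine mean curvature is exactly $-\rho_{uv}/\rho^{2}$; the equivalence with the planarity criterion $\rho_{uv}=0$ from the first Lemma of Section \ref{subsec:max_Abresch} is then immediate, and analyticity globalizes it. The one criticism is that, as written, your argument rests entirely on that proportionality without deriving it --- it is the whole content of the theorem --- so to count as a proof the computation sketched above (or Manhart's equivalent of it) must actually be carried out; the remaining ingredients of your outline (conjugation preserving metric and Gauss map, curvature lines of $X$ becoming asymptotic lines of $X^{\ast}$, exclusion of the degenerate locus) are correct and routine.
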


Therefore, by considering the conjugate surfaces of maximal surfaces with planar curvature lines, we get the following result from Theorem \ref{theo:oneDeformation}.

\begin{coro2}[Corollary to Theorem \ref{theo:oneDeformation}]\label{coro:thomsen1}
There exists a continuous deformation consisting precisely of the maximal surfaces that are also affine minimal surfaces.
\end{coro2}

Furthermore, by the duality of singularities between conjugate surfaces explored in \cite{umehara_maximal_2006}, \cite{kim_prescribing_2007}, \cite{fujimori_singularities_2008}, and \cite{ogata_duality_}, we obtain the following classification of singularities on maximal Thomsen-type surfaces.

\begin{coro2}[Corollary to Theorem \ref{theo:singularityType}]\label{coro:thomsen2}
Let $Y^t(u,v)$ be a maximal Thomsen-type surface where $Y^t(u,v)$ is the conjugate surface of $X^t(u,v)$ as defined in Theorem \ref{theo:singularityType}. The images of a single portion of $Y$ around singular points are locally diffeomorphic to cuspidal edges except at the following number of points.

	\[\begin{array}{@{} cccc @{}}
		\toprule
		 & \text{\# of CCR} & \text{\# of SW} & \text{\# of CB}\\
		 \midrule
		 0 < t < 1/\sqrt{2} & 2 & 0 & 0\\
		 t = 1/\sqrt{2} &2 & 0 & 2\\
		 1/\sqrt{2} < t < 1 &  2 & 4 & 0\\
		 t = 1 &  1 & 2 & 0\\
		 1 < t & 4 & 4 & 0\\
		 \bottomrule
	\end{array}\]
where CB stands for cuspidal butterfly.

Moreover, if $Y(u,v)$ is a maximal surface that is also an affine minimal surface, then the image of $Y$ around the singular point $p$ must be locally diffeomorphic to one of the following: conelike singularity, fold singularity, cuspidal edge, swallowtail, cuspidal cross cap, or cuspidal butterfly.
\end{coro2}

\begin{figure}
    \centering
    \scalebox{0.9}{\includegraphics{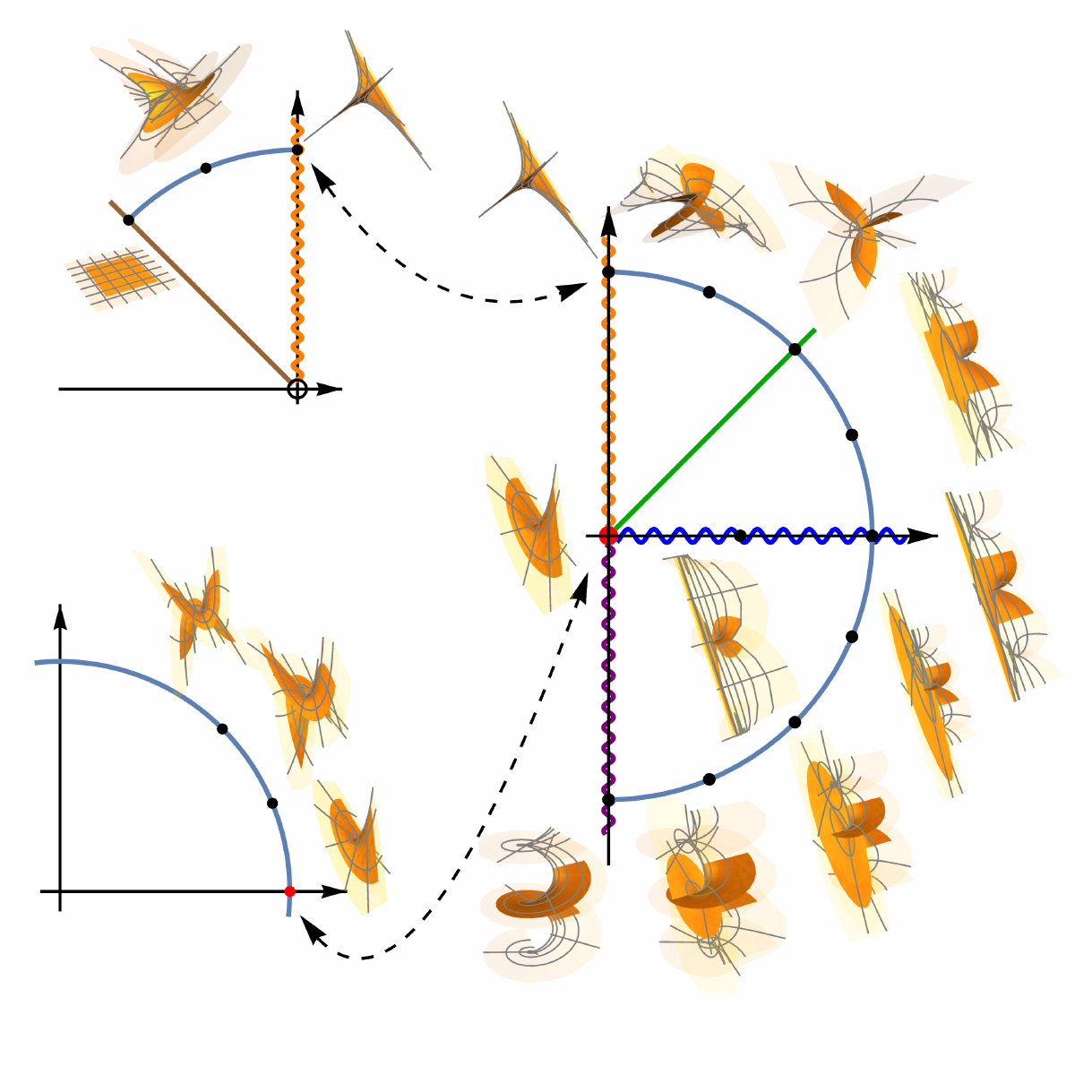}}
    \caption{Maximal surfaces that are also affine minimal and their deformations.}
    \label{fig:surfConj}
\end{figure}

\begin{figure}
    \begin{center}
    \begin{minipage}{0.5\textwidth}
        \centering
        \scalebox{0.5}{\includegraphics{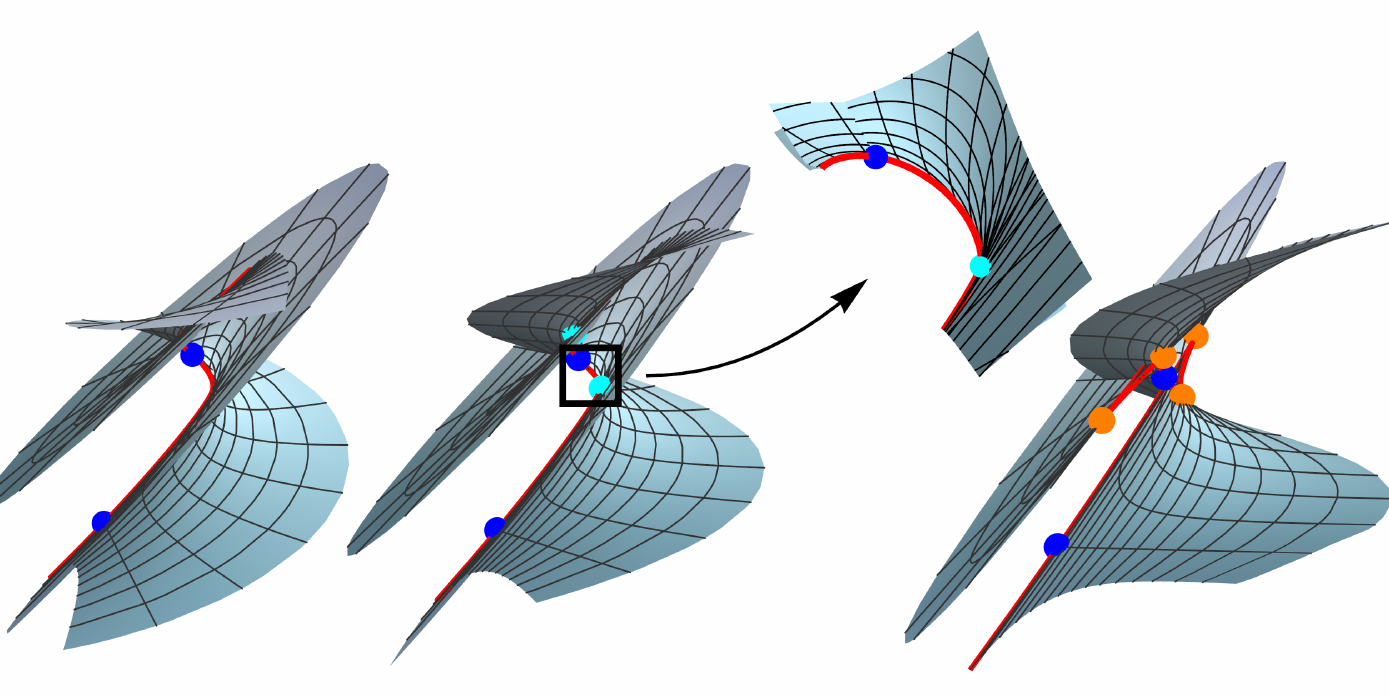}}
    \end{minipage}
    \begin{minipage}{0.4\textwidth}
        \centering
        \scalebox{0.5}{\includegraphics{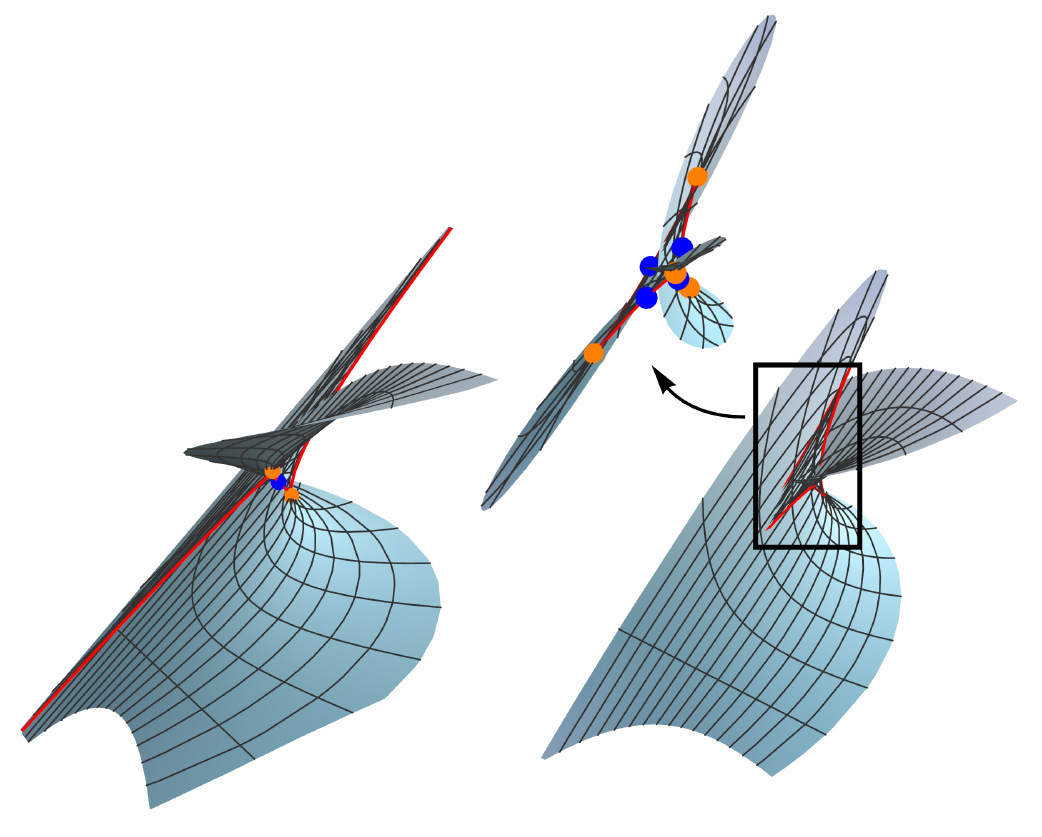}}
    \end{minipage}
    \caption{Types of singularities for maximal Thomsen-type surfaces where the cuspidal edges are highlighted by a red line, swallowtails by orange points, cuspidal cross caps by blue points, and cuspidal butterflies by cyan points.}
    \label{fig:singDiagConj1}
    \end{center}
\end{figure}

%

\vspace{15pt}
{\bf Acknowledgements.} The authors express their gratitude to Professor Shoichi Fujimori, Professor Hitoshi Furuhata, Professor Wayne Rossman, Keisuke Teramoto, and the referee for many useful comments.

\nocite{*}
\bibliographystyle{abbrvnat}
\bibliography{joseph-ogata2.bib}

\end{document}